\newcommand{\hide}[1]{}
\def\textcolor#1{}
\newcommand{\R}{\mathbb{R}}
\renewcommand{\rho}{\varrho}
\renewcommand{\phi}{\varphi}
\renewcommand{\theta}{\vartheta}
\DeclareMathOperator{\core}{c}
\DeclareMathOperator{\diam}{diam}
\DeclareMathOperator{\Vol}{Vol}
\renewcommand{\geq}{\geqslant}
\renewcommand{\leq}{\leqslant}
\theoremstyle{theorem}
\newtheorem{theorem}{Theorem}[section]
\newtheorem{lemma}[theorem]{Lemma}
\newtheorem*{conjecture}{Conjecture}
\newtheorem{MainTheorem}{Theorem}
\theoremstyle{definition}
\newtheorem{definition}[theorem]{Definition}
\theoremstyle{remark}
\newtheorem*{remark}{\textsc{Remark}}
\newcounter{reminder}
\newtheoremstyle{claim}
  {}
  {}
  {\itshape}
  {0pt}
  {\scshape}
  {.}
  { }
  {\thmname{#1}\thmnumber{ #2}\thmnote{ (#3)}}
\theoremstyle{claim}
\numberwithin{equation}{section}
\title[Sausage body is a solution for a reverse isoperimetric problem]{A sausage body is a unique solution for a reverse isoperimetric problem}
\subjclass[2010]{52A30, 52A38; 53A07, 52A39, 52A40, 52B60}
\author[Roman Chernov]{Roman Chernov}
\author[Kostiantyn Drach]{Kostiantyn Drach}
\author[Kateryna Tatarko]{Kateryna Tatarko}
\date{}
\address{Jacobs University Bremen, Research I, Campus Ring 1, 28759 Bremen, Germany}
\email{roman1chernov@gmail.com}
\email{k.drach@jacobs-university.de, kostya.drach@gmail.com}
\address{University of Alberta, 677 Central Academic Building, Edmonton, AB, T6G 2H1, Canada}
\email{tatarko@ualberta.ca}
\thanks{The research of the first two authors was partially supported by the advanced grant ``HOLOGRAM'' of the European Research Council (ERC), which is gratefully acknowledged. The second author is also grateful to Cornell University for their hospitality during his visit there. We thank Alexander Litvak and Vlad Yaskin for reading some parts of the original manuscript and providing some useful remarks. We are also grateful to the anonymous referee for suggesting simplifications to the original proof. }
\begin{document}

\begin{abstract}
We consider the class of \emph{$\lambda$-concave} bodies in $\mathbb R^{n+1}$; that is, convex bodies with the property that each of their boundary points supports a tangent ball of radius $1/\lambda$ that lies locally (around the boundary point) inside the body. In this class we solve a \emph{reverse} isoperimetric problem: we show that the convex hull of two balls of radius $1/\lambda$ (a \emph{sausage body}) is a unique volume minimizer among all $\lambda$-concave bodies of given surface area. This is in a surprising contrast to the standard isoperimetric problem for which, as it is well-known, the unique maximizer is a ball. We solve the reverse isoperimetric problem by proving a \emph{reverse quermassintegral inequality}, the second main result of this paper. 

\vspace{0.3cm}

\noindent{\it Keywords:} $\lambda$-concavity; $\lambda$-convexity; reverse isoperimetric inequality; quermassintegrals; reverse isodiametric inequality. 
\end{abstract}

\maketitle

\section{Introduction}

The classical \emph{isoperimetric inequality} states that if $K$ is an arbitrary domain in $\mathbb R^{n+1}$ with volume $\Vol_{n+1}(K)$ and surface area $\Vol_n(\partial K)$, then
\begin{equation}
\label{Eq:ClassicalIsop}
\Vol_{n+1}(K) \leqslant \frac{\Vol_n(\partial K)^{\frac{n+1}{n}}}{\left(\omega_{n+1}\right)^{\frac{1}{n}} \left(n+1\right)^{\frac{n+1}{n}} },
\end{equation}
where $\omega_{n+1}$ is the volume of the unit ball in $\mathbb R^{n+1}$. It is known that equality in (\ref{Eq:ClassicalIsop}) holds if and only if $K$ is a ball. In other words, the classical isoperimetric inequality asserts that among all domains of given surface area, the ball has the largest possible volume. 

Inequality (\ref{Eq:ClassicalIsop}) has a long and beautiful history, and has been generalized to a variety of different settings (see, for example, surveys \cite{BZ, Ros}). The distinctive point of almost all of these generalizations is that the extreme object is always a ball, as the most symmetric body. On the other hand, the problem can be looked at from a different point of view: under which conditions can one \emph{minimize} the volume among all domains of a given constraint (such as of a given surface area, etc.)? Questions of such type are known as \emph{reverse isoperimetric problems}, and have been actively studied recently. 

The naive attempt of minimizing volume among \emph{all} sets of a given surface area will clearly lead to a trivial result: the $(n+1)$-dimensional volume is zero for every set with empty interior. Therefore, we must consider a family of sets with additional conditions imposed in order to obtain a well-posed reverse isoperimetric problem. One of the natural conditions is convexity or strict convexity.   

One of the first results on the reverse isoperimetric problem is due to Keith Ball. In his celebrated works \cite{Bal1, Bal2} he showed that for any convex body $K$ in $\R^{n + 1}$ there is an affine transformation $T$ such that the volume of $T(K)$ is no smaller than that of the standard simplex of the same surface area; if the bodies are additionally assumed to be symmetric, then the cube is an extreme object. The equality case in Ball's reverse isoperimetric inequalities was completely settled later by Barthe \cite{Ba}. Observe that for Ball's approach the minimizers are no longer balls.

Another approach towards obtaining a reverse isoperimetric inequality was recently taken in \cite{PZh}, where the authors provided a \emph{lower} bound on the area enclosed by a convex curve $\gamma \subset \mathbb R^2$ in terms of its length and the area of the domain enclosed by the locus of curvature centers of $\gamma$. The authors also showed that equality is attained only for a disk. In this respect, the results in \cite{PZh} do not follow the philosophy of a reverse isoperimetric problem. See also \cite{XXZZ}, but again these results, although called `reverse', do not follow the philosophy of a reverse isoperimetric problem.

A different approach towards reversing the classical isoperimetric inequality is by assuming some \emph{curvature constraints} for the boundary. It was pioneered by Howard and Treibergs \cite{HTr95} who proved a sharp reverse isoperimetric inequality on the Euclidean plane for closed embedded curves whose curvature $k$, in a weak sense, satisfies $|k| \leqslant 1$, and whose lengths are in $[2\pi;14\pi/3)$. In~\cite{Ga12}, Gard extended this result to surfaces of revolution that lie in $\mathbb R^3$ and whose principal curvatures, again in a weak sense, are bounded in absolute value by $1$, and the surface areas are not too big. Note that the mentioned curvature restrictions do not imply convexity. 

At the same time, motivated by the study of strictly convex hypersurfaces in Riemannian spaces  (see, for instance, \cite{BM, Bor02, BorDr13}), Borisenko and Drach in a series of papers \cite{BorDr14, BorDr15_1, Dr14} obtained two-dimensional reverse isoperimetric inequalities for so-called \emph{$\lambda$-convex curves}, i.e.\,curves whose curvature $k$, in a weak sense, satisfies $k \geqslant \lambda > 0$. Recently, these results were generalized in \cite{BorNA} for $\lambda$-convex curves in Alexandrov metric spaces of curvature bounded below. The result of Borisenko completely settles the reverse isoperimetric problem for $\lambda$-convex curves.

$\lambda$-convexity is a notion that can be easily transferred to higher dimensions. A convex body in $\mathbb R^{n+1}$ is $\lambda$-convex if the principal curvatures $(k_i)_{i=1}^n$ of the boundary of the body are uniformly bounded, in a weak sense, by $\lambda$, i.e.\,$k_i \geqslant \lambda > 0$ for all $i \in \{1,\ldots,n\}$ (we refer to \cite{BM, BGR, DrPhD} for various results concerning the geometry of multidimensional $\lambda$-convex bodies). It is worth pointing out that the reverse isoperimetric problem for $\lambda$-convex bodies has a non-trivial solution in any dimension, although for dimensions greater than two it is a hard problem that is still widely open (see Subsection~\ref{Subsec:LambdaConvex}).

In this paper we consider a notion, in a sense, dual to the notion of $\lambda$-convexity. In particular, we consider so-called \emph{$\lambda$-concave bodies} in $\mathbb R^{n+1}$. These are the convex sets such that the principal curvatures of their boundaries satisfy $\lambda \geqslant k_i \geqslant 0$ for all $i \in \{1, \ldots, n\}$ (in viscosity sense, see Definition~\ref{Def:LambdaConcave}). For $\lambda$-concave bodies we completely solve the reverse isoperimetric problem in any dimension. This is the first result on the reverse isoperimetric problem in $\mathbb R^{n+1}$, besides the celebrated results of Ball and their various extensions, where the inequality is not restricted to curves or surfaces. Moreover, our methods allow us to prove the full family of sharp inequalities involving quermassintegrals of a convex body.

\subsection{Further motivation.}

Part of our motivation, besides previously mentioned work on the reverse isoperimetric problem for $\lambda$-convex domains due to Borisenko and Drach, came from  results on so-called \emph{Will's conjecture}.

If $K$ is a planar convex body with inradius $r$, then the inequality
\begin{equation*}
    \Vol_2(K)\leqslant r\Vol_1(\partial K) - r^2\pi
\end{equation*}
is called Bonnesen's inradius inequality. Equality holds for the sausage body, that is, the Minkowski sum of a line segment and a circle with radius $r$. An extension of Bonnesen's inradius inequality to higher dimensions was conjectured by Wills \cite{W} in 1970. He conjectured that  
\begin{equation*}
\Vol_{n+1}(K) \leqslant r \Vol_{n}(\partial K) - n\,r^{n+1}\,\omega_{n+1},
\end{equation*}
for every convex body $K \subset \mathbb R^{n+1}$ with inradius $r$. This conjecture was proven independently by Bokowski \cite{B} and Diskant \cite{D}. Although the same inequality with the circumradius $R$ of $K$ substituting $r$ is not true in dimensions greater than two (see \cite{D,H}), Bokowski and Heil \cite{BH86} showed that for higher dimensions, in fact, the inequality sign is reversed: 
\begin{equation}
\label{Eq:BHIneq}
\Vol_{n+1}(K) \geqslant \frac{2R}{n}\Vol_n(\partial K) - \frac{(n +2) R^{n+1}}{n}\omega_{n+1}.
\end{equation}
In \cite{BH86} inequality \eqref{Eq:BHIneq} was obtained as a corollary of the following more general result
\begin{theorem}[\cite{BH86}]
For an arbitrary convex body $K \subset \R^{n+1}$ with circumradius $R$, the inequalities 
\begin{equation}\label{BHMainIneq}
    c_{ijk}R^iW_i(K) + c_{jki}R^jW_j(K) + c_{kij}R^kW_k(K) \geqslant 0,
\end{equation}
hold for every $0 \leqslant i < j < k \leqslant n+1$, where $c_{pqr} = (r - q)(p + 1)$.\qed
\end{theorem}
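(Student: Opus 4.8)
My plan is to show that, once the coefficients are expanded, the three–term inequality \eqref{BHMainIneq} is exactly the statement that the sequence
\[ a_p := (p+1)\,R^p\,W_p(K), \qquad p = 0,1,\dots,n+1, \]
is \emph{convex} in $p$. Writing $c_{ijk}=(k-j)(i+1)$, $c_{jki}=(i-k)(j+1)$, $c_{kij}=(j-i)(k+1)$, inequality \eqref{BHMainIneq} becomes $(k-j)a_i-(k-i)a_j+(j-i)a_k\ge 0$; dividing by $k-i>0$ and using the identity $j=\tfrac{k-j}{k-i}\,i+\tfrac{j-i}{k-i}\,k$ rewrites it as $a_j\le \lambda a_i+(1-\lambda)a_k$ with $\lambda=\tfrac{k-j}{k-i}\in(0,1)$. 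Since a real sequence is convex precisely when its consecutive second differences are nonnegative, the whole family \eqref{BHMainIneq} is equivalent to the single chain
\[ a_{i+2}-2a_{i+1}+a_i\ \geq\ 0, \qquad 0\le i\le n-1. \qquad (\ast) \]
A reassuring check: for $K=RB$ one has $W_p(RB)=R^{\,n+1-p}\omega_{n+1}$, so $a_p=(p+1)R^{\,n+1}\omega_{n+1}$ is \emph{affine} in $p$ and every instance of \eqref{BHMainIneq} holds with equality — the expected extremal behaviour of the ball.

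I would then rewrite $(\ast)$, after cancelling the common factor $R^i$, in the equivalent form
\[ (i+3)R^2W_{i+2}(K)-2(i+2)R\,W_{i+1}(K)+(i+1)W_i(K)\ \geq\ 0. \]
The first ingredient is the elementary circumradius estimate
\[ W_i(K)\ \leq\ R\,W_{i+1}(K),\qquad 0\le i\le n, \qquad (\star) \]
which follows from the two integral representations $W_{n-j}(K)=\tfrac{1}{n+1}\int_{S^n}h\,dS_j$ and $W_{n+1-j}(K)=\tfrac{1}{n+1}\int_{S^n}dS_j$ (with $dS_j$ the $j$-th area measure of $K$): placing the origin at the circumcenter makes the support function satisfy $0\le h\le R$ pointwise, and integrating $h\le R$ against the nonnegative measure $dS_{n-i}$ gives $(\star)$. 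Equivalently, $(\star)$ is monotonicity of mixed volumes under replacing one copy of $K$ by the circumball $RB\supseteq K$. Introducing the nonnegative gaps $D_p:=R^p\bigl(R\,W_{p+1}(K)-W_p(K)\bigr)\ge 0$, a short computation collapses $(\ast)$ into the single comparison
\[ (i+3)\,D_{i+1}\ \geq\ (i+1)\,D_i. \qquad (\ast\ast) \]

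The crux — and the step I expect to be the main obstacle — is precisely $(\ast\ast)$, i.e.\ the passage from the \emph{first–order} bound $(\star)$ to this \emph{second–order} statement. The naive combination of $(\star)$ with the Aleksandrov–Fenchel inequalities $W_{i+1}^2\ge W_iW_{i+2}$ is provably insufficient: substituting the borderline value $R=W_i/W_{i+1}$ allowed by $(\star)$ into the left–hand side of $(\ast)$ and invoking Aleksandrov–Fenchel produces a quantity that is $\le 0$, the wrong sign. Hence the circumradius must enter more strongly than through $(\star)$ alone. To prove $(\ast\ast)$ I would return to the representation $R\,W_{p+1}(K)-W_p(K)=\tfrac1{n+1}\int_{S^n}(R-h)\,dS_{n-p}$, in which the weight $R-h\ge 0$ is fixed; then $(\ast\ast)$ becomes a comparison between integrals of this nonnegative weight against area measures of consecutive orders, $dS_{n-i-1}$ and $dS_{n-i}$, which I would attack through the Aleksandrov–Fenchel inequalities for \emph{mixed} area measures while using $0\le R-h\le R$ to control the weight. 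Reconciling the combinatorial factors $(i+3)$ and $(i+1)$ with the shift in the order of the area measure, exploiting $h\le R$ at full strength, is the heart of the argument and is exactly where the special geometry of the circumsphere is used.
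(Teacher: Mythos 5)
Everything you do before $(\ast\ast)$ checks out: the reformulation of \eqref{BHMainIneq} as convexity of the sequence $a_p=(p+1)R^pW_p$, the equivalence of the full family with the consecutive second differences $(\ast)$ (this telescoping device is exactly the one the present paper reuses in Lemma~\ref{Lem:3Consec}, which it credits to \cite{BH86}), the bound $(\star)$ via $0\le h\le R$ integrated against the area measure $dS_{n-i}$, the identity collapsing $(\ast)$ to $(i+3)D_{i+1}\ge(i+1)D_i$, and the equality check for the ball are all correct. Bear in mind also that the paper itself offers no proof of this statement --- it is quoted from \cite{BH86} with a \emph{qed} mark --- so the only internal overlap to compare with is that reduction lemma, which you have reproduced.

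The genuine gap is that the proposal stops precisely where the theorem begins: $(\ast\ast)$ carries the entire content, you state yourself that it is ``the heart of the argument,'' and you offer only a direction of attack, not an argument. Moreover, the direction indicated is unlikely to close the gap. Setting $u_m:=V(K[m],\,RB[n+1-i-m],\,B[i])$ (with $[m]$ denoting $m$ repeated copies in the mixed volume), your $(\ast\ast)$ is equivalent to
\begin{equation*}
(i+3)\left(u_{n-i-1}-u_{n-i}\right)\ \geq\ (i+1)\left(u_{n-i}-u_{n+1-i}\right),
\end{equation*}
a comparison of consecutive \emph{differences} of a decreasing sequence. The Aleksandrov--Fenchel inequalities you propose to invoke give $u_m^2\ge u_{m-1}u_{m+1}$, i.e.\ log-concavity, which controls \emph{ratios}: it yields $1-u_{n-i}/u_{n-i-1}\le 1-u_{n+1-i}/u_{n-i}$, while monotonicity gives $u_{n-i-1}\ge u_{n-i}$, and these two facts pull the difference comparison in opposite directions; so Aleksandrov--Fenchel plus $0\le R-h\le R$ does not obviously produce $(\ast\ast)$, and you yourself verified that the naive combination has the wrong sign. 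The index-dependent weights $(i+1)$ and $(i+3)$, which shift with the order of the area measure, are a further sign that a measure-by-measure comparison is not the mechanism; the actual proof in \cite{BH86} instead builds integral representations of the quermassintegrals under which, with the origin at the circumcenter, the full consecutive combination becomes an integral with pointwise nonnegative integrand. In short: your reduction is sound bookkeeping, but the missing step is not a fillable technicality --- it is the theorem itself.
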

Here $W_i(K)$ is the \textit{quermassintegral of order $i$} of the convex body $K$, $i \in \{1, \dots, n+1\}$ (see the next subsection and Section~\ref{Sec:Background} for details). Quermassintegrals can be viewed as geometric quantities assigned to a convex body that are a higher-dimensional generalization of the integral curvature of a closed curve, and can be explicitly calculated in terms of the principal curvatures $k_i$ of $\partial K$, provided $\partial K$ is sufficiently smooth (see (\ref{Eq:W_j})). The quermassintegrals of different order provide a natural embedding of the volume $\Vol_{n+1}(K)$, the surface area $\Vol_{n}(\partial K)$ and the volume of the unit ball $\omega_{n+1}$ into the sequence $(W_i (K))_{i=0}^{n+1}$ for which (up to a constant) these are respectively, the zeroth, the first, and the $(n+1)$-th element. Therefore, \eqref{Eq:BHIneq} is a special case of \eqref{BHMainIneq} with $i=0$, $j=1$ and $k = n+1$.  

The form of the Bokowski--Heil inequality (\ref{Eq:BHIneq}) inspired the statement of our main result, Theorem~\ref{Thm:RQMI}, although we use different techniques for the proof. It appears that, having a natural inclusion of the volume, the surface area and the volume of the unit ball into the sequence of quermassintegrals helps to solve the reverse isoperimetric problem for $\lambda$-concave bodies in $\mathbb R^{n+1}$ for every $n \geqslant 1$.

\subsection{The main results.}

Recall that a \emph{convex body} in the Euclidean space $\mathbb R^{n+1}$ is a compact convex set with a non-empty interior. In this paper balls will be closed sets.

\begin{definition}[$\lambda$-concave body]
\label{Def:LambdaConcave}
For a given $\lambda > 0$, a convex body $K \subset \mathbb R^{n+1}$ is called \emph{$\lambda$-concave} if for every $p \in \partial K$ there exists a ball $B_{1/\lambda, p}$ (called a \emph{supporting ball at $p$}) of radius $1/\lambda$ passing through $p$ in such a way that 
\begin{equation}
\label{Eq:LambdaLocal}
B_{1/\lambda, p} \cap U(p) \subseteq K \cap U(p)
\end{equation}
for some small open neighborhood $U(p) \subset \mathbb R^{n+1}$ of $p$.
\end{definition}

Note that since $K$ is assumed to be convex, if $K$ is $\lambda$-concave, then a supporting ball is unique at every point. As for the nomenclature, compare it to the notion of \textit{$\lambda$-convexity} (see \cite{BGR, BorDr13, DrPhD}), for which inclusion (\ref{Eq:LambdaLocal}) is reversed (see also the discussion in Subsection~\ref{Subsec:LambdaConvex}).

If the boundary $\partial K$ of a convex body $K$ is at least $C^2$-smooth, then $K$ is $\lambda$-concave if and only if the principal curvatures $k_i(p)$ for all $i \in \{1,\ldots, n\}$ are non-negative and uniformly bounded above by $\lambda$, i.e.\,$0 \leqslant k_i(p) \leqslant \lambda$ for every $i$ and $p \in \partial K$. Equivalently, in the smooth setting $\lambda$-concavity can be expressed in terms of uniformly bounded \emph{normal curvature}. Let $p \in \partial K$ be a point, $v \in T_p \partial K$ be a vector, $\nu$ be the inward pointing normal to $\partial K$ at $p$, and $\pi(p,v)$ be the two-dimensional plane through $p$ spanned by $v$ and $\nu$. The \emph{normal curvature} $k_{\text{n}}(p, v)$ of $\partial K \subset \mathbb R^{n+1}$ at the point $p$ in the direction of $v$ is defined as
$$
k_{\text{n}}(p,v) := \kappa(p),
$$
where $\kappa(p)$ is the curvature of the planar curve $\partial K \cap \pi(p,v)$ at the point $p$. Using this notion, a convex body $K$ with smooth boundary is $\lambda$-concave if and only if $0 \leqslant k_{\text{n}}(p,v) \leqslant \lambda$ uniformly over $p$ and $v$. In general, $K$ is $\lambda$-concave if the uniform bound on normal curvatures is satisfied in the \emph{viscosity sense} (see \cite[Definition 2.3]{BGR} for a similar approach).

Recall that for a convex body $K \subset \mathbb R^{n+1}$ the \emph{quermassintegral of order $i$} (denoted by $W_i(K)$ with $i \in \{0, \ldots, n+1\}$) arises as a coefficient in the polynomial expansion 
$$
\Vol_{n+1}(K + t B) = \sum\limits_{i = 0}^{n+1} {n+1 \choose i} W_i(K) t^i
$$
known as the \emph{Steiner formula}; here $B$ is the unit Euclidean ball in $\R^{n+1}$ and `$+$' stands for the Minkowski addition; see Section~\ref{Sec:Background} for details. 

\begin{definition}[$\lambda$-sausage body]
A \emph{$\lambda$-sausage body} in $\mathbb R^{n+1}$ is the convex hull of two balls of radius $1/\lambda$ (see Figure~\ref{Fig:Sausage}). 
\end{definition}

\begin{figure}[t]
\centering
\includegraphics[scale=0.75]{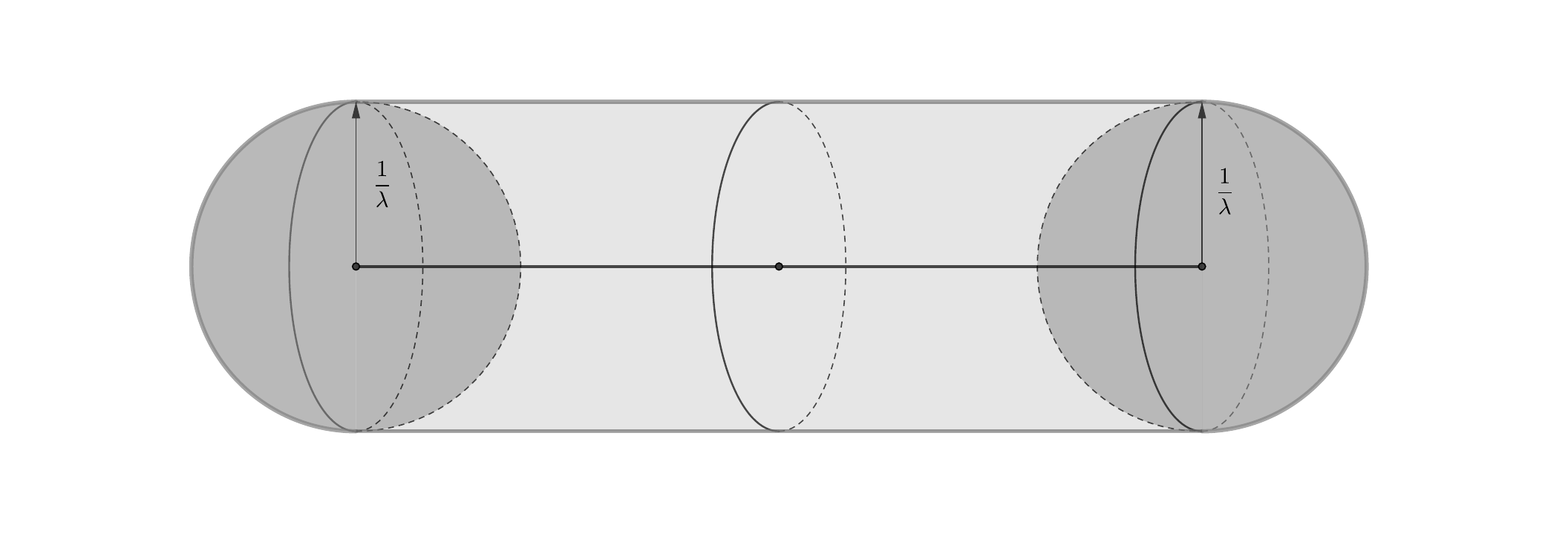}
\caption{A $\lambda$-sausage body.}
\label{Fig:Sausage}
\end{figure}

We are now ready to state the main results of the paper.

\begin{MainTheorem}[Reverse quermassintegral inequality for $\lambda$-concave bodies]
\label{Thm:RQMI}
Let $K \subset \mathbb R^{n+1}$ be a convex body. If $K$ is $\lambda$-concave, then
\begin{equation}
\label{Eq:MainIneq}
(k - j) \frac{W_i(K)}{\lambda^{i}} + (i - k) \frac{W_j(K)}{\lambda^{j}} + (j - i) \frac{W_k(K)}{\lambda^{k}} \geqslant 0
\end{equation}
for every triple $(i, j, k)$ with $0 \leqslant i < j < k \leqslant n+1$. Moreover, equality in (\ref{Eq:MainIneq}) holds if and only if $K$ is a $\lambda$-sausage body. 
\end{MainTheorem}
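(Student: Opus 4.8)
The plan is to recognize the inequality \eqref{Eq:MainIneq} as a statement of \emph{discrete convexity}. Writing $w_i := W_i(K)/\lambda^i$ for $i \in \{0,\dots,n+1\}$, the three coefficients $(k-j),(i-k),(j-i)$ sum to zero, and a direct check shows that \eqref{Eq:MainIneq} is precisely the assertion that the point $(j,w_j)$ lies on or below the chord joining $(i,w_i)$ and $(k,w_k)$. Hence \eqref{Eq:MainIneq} for \emph{all} triples $0\le i<j<k\le n+1$ is equivalent to the convexity of the finite sequence $(w_i)$, which in turn is equivalent to the nonnegativity of its second differences
\begin{equation*}
\Delta^2 w_i := w_{i+2}-2w_{i+1}+w_i \geq 0, \qquad i\in\{0,\dots,n-1\}.
\end{equation*}
So it suffices to prove $\Delta^2 w_i \ge 0$.

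The structural input I would use is that a convex body $K$ is $\lambda$-concave if and only if it admits a rolling ball of radius $1/\lambda$; equivalently, setting $\rho := 1/\lambda$ and letting $L := \{x : x + \rho B \subseteq K\}$ be the inner parallel body, one has the Minkowski decomposition $K = L + \rho B$ with $L$ a (possibly lower-dimensional) compact convex set. Granting this, I would expand the quermassintegral $W_i(K)=V(K[n+1-i],B[i])$, where $[m]$ denotes an $m$-fold entry, by multilinearity of mixed volumes. Substituting $K=L+\rho B$ and collecting terms yields
\begin{equation*}
w_i=\rho^i W_i(K)=\sum_{s=i}^{n+1}\binom{n+1-i}{s-i}\,a_s, \qquad a_s:=\rho^{s}\,W_s(L)\geq 0,
\end{equation*}
where nonnegativity of the $a_s$ is just nonnegativity of quermassintegrals of the convex set $L$.

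With this expression the second difference collapses by Pascal's rule. Computing the coefficient of $a_s$ in $\Delta^2 w_i$ gives $\binom{n+1-i}{s-i}-2\binom{n-i}{s-i-1}+\binom{n-1-i}{s-i-2}=\binom{n-1-i}{s-i}$, so that
\begin{equation*}
\Delta^2 w_i=\sum_{s=i}^{n-1}\binom{n-1-i}{s-i}\,a_s\geq 0,
\end{equation*}
each summand being nonnegative. This proves \eqref{Eq:MainIneq} for every triple. I expect the main obstacle to be making the decomposition $K=L+\rho B$ rigorous from the viscosity definition of $\lambda$-concavity --- in particular upgrading the local containment \eqref{Eq:LambdaLocal} of a supporting ball to global containment and identifying $L$ as a genuine convex set --- rather than the (now routine) algebra above.

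For the equality case, fix a triple $(i,j,k)$. Since $(w_i)$ is convex, equality in \eqref{Eq:MainIneq} means $(j,w_j)$ lies \emph{on} the chord over $[i,k]$, which forces $(w_m)$ to be affine on $\{i,\dots,k\}$ and hence $\Delta^2 w_m=0$ for $i\le m\le k-2$. Taking $m=k-2$, the vanishing sum $\sum_{s=k-2}^{n-1}\binom{n-k+1}{s-k+2}a_s=0$ of nonnegative terms forces in particular $a_{n-1}=\rho^{\,n-1}W_{n-1}(L)=0$. Since $W_{n-1}(L)=V(L,L,B[n-1])$ is positive precisely when $\dim L\ge 2$, this gives $\dim L\le 1$, so $L$ is a point or a segment and $K=L+\rho B$ is a $\lambda$-sausage body (a ball arising as the degenerate case of coincident balls). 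Conversely, if $L$ is a segment then $a_s=0$ for all $s\le n-1$, whence $\Delta^2 w_m=0$ for every $m$ and equality holds in \eqref{Eq:MainIneq} for all triples; this closes the equivalence.
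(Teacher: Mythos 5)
Your proposal is correct, and it takes a genuinely different route from the paper. The first step coincides: your observation that \eqref{Eq:MainIneq} for all triples is equivalent to convexity of the sequence $w_i = W_i(K)/\lambda^i$, i.e.\ to $\Delta^2 w_l \geq 0$ for consecutive triples, is exactly the paper's Lemma~\ref{Lem:3Consec} (an idea the authors borrow from Bokowski--Heil). Likewise, both arguments rest on the same structural fact, namely that Blaschke's rolling theorem (Theorem~\ref{Thm:BlBallRoll}, cited in the paper to Blaschke and Brooks--Strantzen) upgrades the viscosity definition to the global decomposition $K = K_{\core} + \rho B$ --- so the ``main obstacle'' you flag is already settled in the literature and is handled the same way in the paper. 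After that the methods diverge. The paper proves $\Delta^2 w_l \geq 0$ by induction on the ambient dimension: the triple $(0,1,2)$ is handled via the inner-parallel-body Steiner formula \eqref{Eq:SchnFormulaGen} and a generating-function computation (Lemma~\ref{Lem:FirstTriple}), while the triples with $l \geq 1$ are pushed down to the $(0,1,2)$ case in lower dimensions through the Kubota formula and the fact that projections of $\lambda$-concave bodies are $\lambda$-concave (Lemmas~\ref{Lem:Kubota}, \ref{Lem:Projection}, \ref{Lem:NotFirstTriple}); the equality case then requires arguing that $K|P$ is a sausage for almost every subspace $P$ and deducing $\dim K_{\core} \leq 1$. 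You instead expand once, in the top dimension, using multilinearity of mixed volumes (equivalently, the Steiner-type formula $W_i(L+tB) = \sum_m \binom{n+1-i}{m} t^m W_{i+m}(L)$, valid for any compact convex $L$ including lower-dimensional ones), obtaining $w_i = \sum_{s=i}^{n+1}\binom{n+1-i}{s-i} a_s$ with $a_s = \rho^s W_s(K_{\core}) \geq 0$; your Pascal-rule collapse
\begin{equation*}
\Delta^2 w_i = \sum_{s=i}^{n-1}\binom{n-1-i}{s-i}\,a_s
\end{equation*}
checks out, and it proves all the consecutive-triple inequalities simultaneously with no induction and no integral geometry. Your equality analysis is also sound and more direct than the paper's: equality in one triple forces $\Delta^2 w_{k-2}=0$, hence $a_{n-1}=W_{n-1}(K_{\core})=0$ (its coefficient is $1$), and the standard positivity criterion for mixed volumes ($V(L,L,B[n-1])>0$ iff $L$ contains two segments with independent directions, Schneider, Theorem~5.1.8) gives $\dim K_{\core} \leq 1$; the converse computation showing $w_i$ is affine for a sausage closes the equivalence, matching the paper's characterization of sausages as bodies with $\dim K_{\core} \leq 1$. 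What each approach buys: yours is shorter and self-contained modulo standard mixed-volume facts, and it yields the explicit formula for $\Delta^2 w_i$ in terms of the quermassintegrals of the core (from which the equality case falls out painlessly); the paper's projection argument avoids invoking the mixed-volume positivity criterion and exposes the geometric stability of the class under projections, but at the cost of a dimension induction and an almost-everywhere argument in the equality case.
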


Since $W_0(K) = \Vol_{n+1}(K)$, $W_1(K) = \Vol_{n} (\partial K) / (n+1)$ and $W_{n+1} (K) = \omega_{n+1}$, inequality (\ref{Eq:MainIneq}) for $i=0$, $j=1$ and $k=n+1$ immediately implies the following result:

\begin{MainTheorem}[Reverse isoperimetric inequality for $\lambda$-concave bodies]
\label{Cor:RIPI}
\label{Thm:RevIsopConcave}
Let $K \subset \mathbb R^{n+1}$ be a convex body. If $K$ is $\lambda$-concave (for some $\lambda>0$), then
\begin{equation}
\label{Eq:MainStatement}
\Vol_{n+1}(K) \geqslant \frac{\Vol_n(\partial K)}{n \lambda} - \frac{\omega_{n+1}}{n \lambda^{n+1}},
\end{equation}
where $\omega_{n+1}$ is the volume of the unit ball in $\mathbb R^{n+1}$. Moreover, equality holds if and only if $K$ is a $\lambda$-sausage body.\qed
\end{MainTheorem}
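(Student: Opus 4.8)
The plan is to derive \eqref{Eq:MainStatement} as a direct specialization of the reverse quermassintegral inequality (Theorem~\ref{Thm:RQMI}), using the fact that the volume, the surface area, and the volume of the unit ball all appear inside the sequence of quermassintegrals. Concretely, I would rely on the three identities
\[
W_0(K) = \Vol_{n+1}(K), \qquad W_1(K) = \frac{\Vol_n(\partial K)}{n+1}, \qquad W_{n+1}(K) = \omega_{n+1},
\]
which hold for every convex body $K$ (the first two from the Steiner expansion, the last because the top-order Steiner coefficient of any body equals that of a ball).

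The single step that does the work is to apply Theorem~\ref{Thm:RQMI} with the admissible triple $(i,j,k) = (0,1,n+1)$, valid because $0 \leqslant 0 < 1 < n+1$ for every $n \geqslant 1$. With this choice the coefficients in \eqref{Eq:MainIneq} specialize to $k - j = n$, $i - k = -(n+1)$, and $j - i = 1$. Substituting the three identities above, the factor $n+1$ cancels against the denominator of $W_1$, and after dividing through by $n$ the inequality collapses exactly to \eqref{Eq:MainStatement}. The equality case requires no extra argument: since equality in \eqref{Eq:MainIneq} for this triple is attained precisely for a $\lambda$-sausage body, the same characterization transfers verbatim to \eqref{Eq:MainStatement}.

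I should be candid that at this point there is no genuine obstacle: granting Theorem~\ref{Thm:RQMI}, Theorem~\ref{Thm:RevIsopConcave} is a one-line substitution, and all of the geometric content has been pushed upstream into the quermassintegral inequality. If one instead sought a self-contained proof of \eqref{Eq:MainStatement} alone, the task would reduce to establishing precisely the $(0,1,n+1)$ instance of \eqref{Eq:MainIneq} --- bounding the volume of a $\lambda$-concave body below by a linear combination of its surface area and $\omega_{n+1}$. The natural route there would still be the monotonicity of the normalized quermassintegral ratios forced by the curvature bound $k_i \leqslant \lambda$, so isolating this single triple does not appear to lighten the essential analysis.
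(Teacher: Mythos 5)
Your proposal is correct and coincides with the paper's own derivation: the paper obtains Theorem~\ref{Thm:RevIsopConcave} exactly by specializing \eqref{Eq:MainIneq} to the triple $(i,j,k)=(0,1,n+1)$ together with the identities $W_0(K)=\Vol_{n+1}(K)$, $W_1(K)=\Vol_n(\partial K)/(n+1)$, $W_{n+1}(K)=\omega_{n+1}$, with the equality characterization transferring directly from Theorem~\ref{Thm:RQMI}. Your arithmetic (coefficients $n$, $-(n+1)$, $1$, cancellation of $n+1$, division by $n$) is accurate, so nothing is missing.
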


Theorem~\ref{Thm:RQMI} (and hence Theorem~\ref{Cor:RIPI}) for $n=1$ and $n=2$ was first proven using different techniques in the bachelor thesis of the first author \cite{RomaThesis}. It should be pointed out that Theorem~\ref{Cor:RIPI} for $n=1$ was suggested earlier in \cite{BorDr15_1}; in that paper the authors also prove a similar result on the two-dimensional sphere.

The paper is organized as follows. In Section~\ref{Sec:Background} we recall some necessary background from convex geometry that will be used in the sequel. In Section~\ref{Sec:MainProof} we provide a proof of the central result of the paper (Theorem~\ref{Thm:RQMI}). Finally, Section~\ref{Sec:Remarks} contains some further remarks on the reverse problems; in particular, in Subsection~\ref{Subsec:RevIsodiam} we obtain a so-called \emph{reverse isodiametric inequality} for $\lambda$-concave bodies, and in Subsection~\ref{Subsec:LambdaConvex} we discuss a connection to the dual problem for \emph{$\lambda$-convex} bodies.  


\section{General background on quermassintegrals and convex geometry}
\label{Sec:Background}


In this section we present some background material and auxiliary lemmas towards the proof of the main result.

The \emph{Minkowski addition} of two convex bodies $K$ and $L$ in $\R^{n+1}$ is defined by
\begin{equation*}
K + L := \{ x + y \colon x \in K,\, y \in L \}.
\end{equation*}
One can rewrite the definition in the following form
\begin{equation*}
K + L = \bigcup\limits_{y \in L} (K + y);
\end{equation*}
that is $K + L$ can be viewed as the set that is covered if $K$ undergoes translations by all vectors in $L$.  Since $K$ and $L$ are convex, then $K + L$ is also convex. For a parameter $t \geqslant 0$, the Minkowski sum $K + tB$, where $B$ is the unit ball in $\mathbb R^{n+1}$, is called the \emph{outer parallel body} for $K$.
The \emph{Minkowski difference} of convex bodies $K$ and $L$ is defined by 
\begin{equation*}
K - L := \{ x\in \R^{n + 1} \colon L + x \subset K \}.
\end{equation*}
Similarly to the operation of addition, we can rewrite the definition of Minkowski difference in the form
\begin{equation*}
K - L = \bigcap\limits_{y \in L} (K - y).
\end{equation*}
For a parameter $t \geqslant 0$, the Minkowski difference $K - tB$ is called the \emph{inner parallel body}.

For a convex body $K \subset \mathbb R^{n+1}$, the $(n+1)$-dimensional volume $\Vol_{n+1}(K + tB)$ of its outer parallel body $K + tB$ is a polynomial in $t$, and by the classical \emph{Steiner formula},  
$$
\Vol_{n+1}(K + t B) = \sum\limits_{i = 0}^{n+1} {n+1 \choose i} W_i(K) t^i,
$$
where $W_i(K)$ is the \emph{quermassintegral of order $i$} of the convex body $K$. 

If the boundary $\partial K$ of the body $K$ is $C^{2}$-smooth, and hence the principal curvatures $k_1, \ldots, k_n$ are well-defined everywhere on $\partial K$, then
\begin{equation}
\label{Eq:W_j}
W_j(K) = \frac1{(n+1){n \choose j-1}} \int \limits_{\partial K} \sigma_{j-1} d\textbf{x} \quad \quad \text{ for } 1\leqslant j\leqslant n+1,
\end{equation}
where $\sigma_0 = 1$ and 
$$
\sigma_l = \sum\limits_{1\leqslant i_1<\dots < i_l \leqslant n} k_{i_1}k_{i_2}\dots k_{i_l}
$$
is the $l$-th symmetric function of principal curvatures. 

By convention, $W_0(K)$ is equal to the $(n+1)$-dimensional volume of the body. In particular, $(n+1) W_1(K)$ is the $n$-dimensional volume (surface area) of $\partial K$, and $(n+1) W_{n+1}(K) =: s_n$ is the $n$-dimensional volume (surface area) of the unit sphere $\mathbb S^n$. Recall that $s_n / (n+1) = \omega_{n+1}$, where $\omega_{n+1}$ is the volume of a unit $n$-ball in $\mathbb R^{n+1}$; hence $W_{n+1}(K) = \omega_{n+1}$.

We will need the following generalization of the Steiner formula for inner parallel bodies (see~\cite[p.\,225]{Sch}):
\begin{equation}
\label{Eq:SchnFormulaGen}
W_{q}(K - B) = \sum \limits_{i = 0}^{n+1-q} (-1)^i \binom{n+1-q}{i}W_{q+i}(K)
\end{equation}
for every $0 \leqslant q \leqslant n+1$. In particular, for $q=0$ we have 
\begin{equation}
\label{Eq:SchnFormula0}
\Vol_{n+1}(K-B) = \sum \limits_{i = 0}^{n+1} (-1)^i \binom{n+1}{i}W_i(K).
\end{equation}

For the later purposes we will also adopt the notation $W_{k, j}(K)$ for the quermassintegral of order $j$ of a convex body $K$ lying in $\mathbb R^k$. In particular, such a distinction is needed for the following \emph{Kubota formula}. 

\begin{lemma}[Kubota formula, {\cite[p.\,301]{Sch}}]
\label{Lem:Kubota}
For given $0 < k \leq n$, let $G_{n+1, k+1}$ be the Grassmann manifold of all $(k+1)$-dimensional linear subspaces of $\mathbb R^{n+1}$, and let $d \bold{P}$ be the probability measure on $G_{n+1, k+1}$ which is invariant under the orthogonal group. Then for every convex body $K$ in $\mathbb R^{n+1}$ and for every integer $j$ with $0 \leq j \leq k$,
\begin{equation*}
\int \limits_{G_{n+1, k+1}} W_{k+1, j}\left(K | P\right) d \bold{P} = \frac{\omega_{k+1}}{\omega_{n+1}} W_{n+1, n - k + j}(K),
\end{equation*}  
where $K|P$ is the orthogonal projection of $K$ onto the $(k+1)$-dimensional linear subspace $P \in G_{n+1, k+1}$. \qed
\end{lemma}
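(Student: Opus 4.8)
The plan is to reduce the Kubota identity to a projection-averaging formula for intrinsic volumes and then to fix the constant by a single computation. First I would pass from quermassintegrals to intrinsic volumes $V_m$. Matching the Steiner polynomial $\Vol_d(L+tB)=\sum_{i=0}^d \binom{d}{i}W_{d,i}(L)\,t^i$ against its intrinsic form $\sum_{m=0}^d \omega_{d-m}V_m(L)\,t^{d-m}$ gives the dictionary $W_{d,i}(L)=\frac{\omega_i}{\binom{d}{i}}V_{d-i}(L)$. Applying this on the left with $d=k+1$ and order $j$, and on the right with $d=n+1$ and order $n-k+j$, one discovers that \emph{both} sides of the claimed formula are multiples of the \emph{same} intrinsic volume, namely $V_{k+1-j}$: the left integrand becomes $\frac{\omega_j}{\binom{k+1}{j}}V_{k+1-j}(K|P)$ and the right-hand side becomes $\frac{\omega_{k+1}}{\omega_{n+1}}\cdot\frac{\omega_{n-k+j}}{\binom{n+1}{n-k+j}}V_{k+1-j}(K)$. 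Here I would crucially invoke that intrinsic volumes are \emph{intrinsic}, i.e.\ independent of the dimension of the ambient space into which a body is isometrically embedded, so that $V_{k+1-j}(K|P)$ computed inside $P\cong\mathbb R^{k+1}$ agrees with its value computed in $\mathbb R^{n+1}$.

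After this reduction the entire statement collapses to the Cauchy--Kubota projection-averaging formula
\begin{equation*}
\int_{G_{n+1,k+1}} V_m(K|P)\, d\mathbf{P} = c(n,k,m)\, V_m(K),\qquad m=k+1-j,
\end{equation*}
with a purely dimensional constant $c(n,k,m)$. To establish this I would argue by induction on the codimension of the projection. A $(k+1)$-subspace $P$ can be reached by first projecting onto a $(k+2)$-subspace $Q$ and then projecting inside $Q$, and $K|P=(K|Q)|P$; integrating in stages by Fubini over the flag of subspaces $\mathbb R^{n+1}\to\cdots\to\mathbb R^{k+1}$, together with the $O(n+1)$-invariance of $d\mathbf{P}$ and the intrinsic nature of $V_m$, reduces the problem to codimension-one projection steps that are ultimately grounded in Cauchy's surface-area formula, which I would prove by polytopal approximation and integration over $\mathbb S^n$ of the cosine of the angle between each facet normal and the direction orthogonal to the projecting hyperplane. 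The constant $c(n,k,m)$ is then fixed by testing on the unit ball, whose projections are again unit balls: using the explicit values $V_m(B^d)=\binom{d}{m}\omega_d/\omega_{d-m}$ one checks that, with the stated normalization $\omega_{k+1}/\omega_{n+1}$ and index shift $n-k+j$, both sides of the Kubota identity reduce to $\omega_{k+1}$.

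I expect the main obstacle to be conceptual at one point and bookkeeping-heavy at another. The conceptual point is that the tempting one-line argument — that $K\mapsto\int_{G}V_m(K|P)\,d\mathbf{P}$ is a continuous, rigid-motion-invariant valuation homogeneous of degree $m$, hence a scalar multiple of $V_m$ by Hadwiger's characterization theorem — is not immediately available, because $K\mapsto V_m(K|P)$ is \emph{not} a valuation: projection commutes with unions but only sub-commutes with intersections, since $(K\cap L)|P\subseteq(K|P)\cap(L|P)$ with equality failing in general. This is exactly what forces the more hands-on flag-manifold induction above. The bookkeeping point is the propagation of the normalizing factors $\omega_\bullet$ and the binomial coefficients through the $W\leftrightarrow V$ dictionary, the averaging formula, and back: a single off-by-one in a homogeneity degree or in the index $n-k+j$ would silently corrupt the final constant, so the most delicate part of writing this up carefully is keeping that accounting exact.
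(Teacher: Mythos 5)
The paper contains no proof to compare against: Lemma~\ref{Lem:Kubota} is quoted verbatim from Schneider \cite[p.\,301]{Sch} and marked \emph{q.e.d.} as a known result, since all the paper needs is the identity itself (to set up the induction via Lemma~\ref{Lem:Projection}). Your sketch is, in substance, the classical textbook proof that the citation points to: the dictionary $W_{d,i}(L)=\frac{\omega_i}{\binom{d}{i}}V_{d-i}(L)$ is correct, both sides of the stated identity do become multiples of the same intrinsic volume $V_{k+1-j}$, the two-stage Fubini over flags (using $K|P=(K|Q)|P$ for $P\subset Q$ and uniqueness of the invariant measures) legitimately reduces everything to Cauchy's surface-area formula, and the normalization test on the unit ball works: since $W_{d,i}(B^d)=\omega_d$ for all $i$, both sides of the Kubota identity equal $\omega_{k+1}$, confirming the constant $\omega_{k+1}/\omega_{n+1}$ and the index shift $n-k+j$. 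So the proposal is a correct reconstruction of the standard proof, modulo the expected level of detail in the codimension-one step.

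One genuine mathematical error, though it sits in a side remark rather than in your actual argument: the claimed obstruction to the Hadwiger route is false. For the valuation property one only needs pairs $K,L$ with $K\cup L$ convex, and in that case projection \emph{does} commute with intersection: $(K\cap L)|P=(K|P)\cap(L|P)$. Indeed, given $x$ in the right-hand side, choose $a\in K$ and $b\in L$ projecting to $x$; the segment $[a,b]$ lies in the convex set $K\cup L$ and projects to the single point $x$, and the closed nonempty sets $[a,b]\cap K$ and $[a,b]\cap L$ cover the connected segment, hence intersect, yielding a point of $K\cap L$ above $x$. Consequently $K\mapsto V_m(K|P)$ is a valuation for each fixed $P$, the averaged functional $K\mapsto\int_{G_{n+1,k+1}}V_m(K|P)\,d\mathbf{P}$ is a continuous, rigid-motion-invariant valuation homogeneous of degree $m$, and Hadwiger's characterization gives the one-line proof (this is precisely the Klain--Rota derivation of the Cauchy--Kubota formulas), with the constant again fixed on the ball. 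Since your flag-manifold induction avoids Hadwiger entirely, this mistake does not invalidate your proof; it only means the hands-on detour you present as forced is in fact optional.
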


The Kubota formula allows to run inductive arguments over the dimension of the space provided that the class of convex bodies in question is closed under orthogonal projections. As we will see now, this is exactly the case for $\lambda$-concave bodies.

The classical result due to Blaschke implies that local condition (\ref{Eq:LambdaLocal}) is in fact global (see \cite{BrStr89, Mil70}, and \cite{Bla56} for the original result of Blaschke).

\begin{theorem}[Blaschke's ball rolling theorem of $\lambda$-concave bodies]
\label{Thm:BlBallRoll}
Let $K \subset \mathbb R^{n+1}$ be a $\lambda$-concave body. Then 
$$
B_{1/\lambda, p} \subseteq K
$$ 
for every point $p \in \partial K$ and every supporting ball $B_{1/\lambda, p}$ at $p$.\qed
\end{theorem}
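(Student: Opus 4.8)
The plan is to prove this as a local-to-global upgrade: the hypothesis in Definition~\ref{Def:LambdaConcave} only supplies, at each $p\in\partial K$, a ball $B_{1/\lambda,p}$ contained in $K$ in a \emph{small} neighborhood of $p$, and I must promote this to containment of the whole ball. Writing $R:=1/\lambda$ and letting $\nu(p)$ denote the inner unit normal, convexity already forces the supporting ball to be unique, so its center is $c(p)=p+R\,\nu(p)$, and the entire question is whether $B_R(c(p))\subseteq K$ for every $p$. I would organize the argument around the \emph{good set}
\[
\Sigma=\{\,p\in\partial K:\ B_R(c(p))\subseteq K\,\},
\]
and show that $\Sigma$ is nonempty, relatively closed, and relatively open in $\partial K$; since $\partial K$ is connected, this gives $\Sigma=\partial K$, which is precisely the assertion. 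This is, of course, a form of Blaschke's rolling theorem, so I expect the plan to parallel the cited references.

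First I would secure the regularity that makes $c(\cdot)$ well behaved. A locally supporting ball of fixed radius $R$ cannot be inscribed tangentially into a convex corner, so $\partial K$ has a unique supporting hyperplane at each point; equivalently the outer normal, and hence the center map $p\mapsto c(p)=p+R\,\nu(p)$, is continuous (in fact the one-sided ball condition yields $C^{1,1}$ regularity). Continuity of $c$ together with closedness of $K$ makes \emph{closedness} of $\Sigma$ immediate: if $p_k\to p$ with $B_R(c(p_k))\subseteq K$, then $c(p_k)\to c(p)$ and $B_R(c(p))\subseteq K$. For \emph{nonemptiness} I would examine a largest inscribed ball of $K$: at a point $w$ where it touches $\partial K$ the inscribed ball and the supporting ball share the normal $\nu(w)$, and since $\partial K$ has normal curvature at most $\lambda$ (osculating radius $\ge R$) one may enlarge the inscribed ball locally up to radius $R$ at every contact point. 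Combined with the fact that the contact normals of a maximal inscribed ball positively span the space, this forces the inradius to be at least $R$; the corresponding genuine radius-$R$ ball realizes the supporting ball at $w$, so $w\in\Sigma$.

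The crux, and the step I expect to be the main obstacle, is \emph{openness} of $\Sigma$: the curvature bound is not obviously an open condition, because $\partial B_R(c(p))$ may meet $\partial K$ at points other than $p$, and a small perturbation of $p$ could a priori push the displaced ball across $\partial K$ at such a contact point. The point to exploit is that at \emph{every} contact point $q\in\partial K\cap\partial B_R(c(p))$ the ball lies in $K$ and is tangent to $\partial K$, so $\nu(q)$ is the inner normal of the ball at $q$ and the supporting ball at $q$ has the \emph{same} center $c(q)=c(p)$; as $\partial K$ has normal curvature at most $1/R$ while $\partial B_R$ has normal curvature exactly $1/R$, the boundary meets the ball tangentially and stays on the inner side to second order, so the contact is never transverse. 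Equivalently, in a monotone picture one grows the balls $B_s(p+s\,\nu(p))$ for $s\le R$, all tangent to $\partial K$ at $p$, and checks that the first contact with the opposite boundary occurs only at $s=R$ — which is exactly where the fatness established above (width $\ge 2R$) rules out premature contact at some $s<R$. Turning this tangency and fatness into a \emph{uniform, quantitative} statement, namely a bound on how far $\partial K$ recedes from $\partial B_R(c(p))$ in terms of the angle between $\nu(p)$ and $\nu(q)$ that survives small perturbations of $p$, is the genuinely technical heart of the matter and constitutes the content of Blaschke's theorem as proved in the cited works; once it is in hand, openness follows and connectedness of $\partial K$ completes the proof.
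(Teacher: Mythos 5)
The paper does not prove this statement: Theorem~\ref{Thm:BlBallRoll} is quoted as a classical result, with the proof delegated to the cited literature (\cite{Bla56}, \cite{BrStr89}, \cite{Mil70}), which is why it carries a \qed{} with no argument. So there is no in-paper proof to match your attempt against; the relevant question is whether your outline is itself a proof, and it is not. You correctly identify the open--closed--nonempty decomposition of the good set $\Sigma$ on the connected boundary $\partial K$ as the standard strategy, but the decisive step --- openness of $\Sigma$, i.e.\ a uniform quantitative bound preventing the perturbed ball $B_{1/\lambda}(c(p'))$ from crossing $\partial K$ near a secondary contact point --- is exactly the content of Blaschke's theorem, and you defer it wholesale to ``the cited works.'' The soft tangency observation you offer in its place cannot close this gap: secondary contacts genuinely occur (for a $\lambda$-sausage body the supporting ball at a point of the cylindrical part touches $\partial K$ along a whole $(n-1)$-sphere, and for $K$ a ball of radius $1/\lambda$ it touches everywhere), and in the low regularity forced by the definition (the support condition yields only $C^{1,1}$ boundary) the ``stays inside to second order'' language is not even available pointwise. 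That this step is hard is precisely why the Brooks--Strantzen reference is a full AMS Memoir.

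The nonemptiness step has the same character. The claim that a maximal inscribed ball can be ``enlarged locally up to radius $1/\lambda$ at every contact point,'' and that positive spanning of the contact normals then forces the inradius to be at least $1/\lambda$, is itself a Blaschke-type assertion: the inradius bound is normally derived as a \emph{corollary} of the rolling theorem, and establishing it beforehand requires its own argument (the contact set can be infinite, so one needs a compactness/uniformity argument to inflate the ball simultaneously near all contacts, and maximality only tells you the center cannot be moved, not that the radius cannot grow). As written, both the nonemptiness and the openness steps are assertions rather than proofs, so the proposal is a correct roadmap of the classical approach --- consistent with the references the paper itself invokes --- but not a proof; at the two places where real work is needed, it, like the paper, ultimately points to the literature.
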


From Blaschke's ball rolling theorem it follows that the class of convex $\lambda$-concave bodies in $\mathbb R^{n+1}$ is exactly the class of convex bodies $K \subset \mathbb R^{n+1}$ such that $K = K_{\core} + B_{1/\lambda}$ for some convex set $K_{\core}$. This motivates the following definition. 

\begin{definition}[Core of a $\lambda$-concave body]
\label{Def:Core}
A \textit{core} of a $\lambda$-concave body $K$ is the set $K_{\core} := K - B_{1/\lambda}$, where ``$-$'' denotes the Minkowski difference of convex sets, $B_{1/\lambda}$ is a ball of radius $1/\lambda$. 
\end{definition}
(Compare this notion to the notion of a \emph{kernel} of a convex body \cite[p.\,374]{SY}.) 

It is easy to see that $K_{\core}$ is a convex set in $\mathbb R^{n+1}$; however, the core is not necessarily $\lambda$-concave, even more, $K_{\core}$ is not necessarily a convex \emph{body} in $\mathbb R^{n+1}$. Recall that the \emph{affine hull} of a convex set $S \subset \mathbb R^{n+1}$ is the affine subspace of least dimension that contains $S$. We will call the \textit{dimension of the core} (denoted by $\dim K_{\core}$) to be the dimension of the affine hull of $K_{\core}$. Clearly, $K_{\core}$ is a convex body if its dimension is $n+1$. In these terms, a $\lambda$-concave body is a $\lambda$-sausage body if and only if the dimension of its core is at most one, and hence it is either a point or a segment.

Let $P$ be a $(k+1)$-dimensional subspace of $\mathbb R^{n+1}$, and $K$ be a $\lambda$-concave body in $\mathbb R^{n+1}$. Then 
\[
K | P = (K_{\core} + B_{1/\lambda})|P = K_{\core}|P + B_{1/\lambda}|P
\]
by linearity of orthogonal projections. But $B_{1/\lambda}|P$ is a ball of radius $1/\lambda$ in $P$, while $K_{\core}|P$ is some convex set in $P$. Therefore, $K|P$ is a $\lambda$-concave body in $P$ with the core equal to $K_{\core}|P$. These facts prove the following easy, but structurally important, lemma.

\begin{lemma}[Orthogonal projections of $\lambda$-concave bodies]
\label{Lem:Projection}
Orthogonal projections of $\lambda$-concave bodies are $\lambda$-concave. More precisely, if $K$ is a $\lambda$-concave body in $\mathbb R^{n+1}$ and $P$ is a $(k+1)$-dimensional linear subspace of $\mathbb R^{n+1}$, then $K|P$ is a $\lambda$-concave body in $P$; moreover, if $K$ is a $\lambda$-sausage body, then so is $K|P$. \qed
\end{lemma}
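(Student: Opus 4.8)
The plan is to reduce the statement to the structural characterization of $\lambda$-concave bodies that follows from Blaschke's ball rolling theorem (Theorem~\ref{Thm:BlBallRoll}): a convex body is $\lambda$-concave if and only if it can be written as $K = K_{\core} + B_{1/\lambda}$ for some convex set $K_{\core}$, its core in the sense of Definition~\ref{Def:Core}. Once this representation is available, the proof is essentially a one-line computation exploiting the compatibility of orthogonal projection with Minkowski addition.

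First I would write $K = K_{\core} + B_{1/\lambda}$ using the characterization above. Since the orthogonal projection $(\cdot)|P$ onto the linear subspace $P$ is a linear map, it commutes with Minkowski addition, so that $K|P = K_{\core}|P + B_{1/\lambda}|P$. The next step is to identify each summand. The image $K_{\core}|P$ is convex, being the image of a convex set under a linear map, and $B_{1/\lambda}|P$ is a ball of radius $1/\lambda$ inside $P$, because the orthogonal projection of a round ball onto a subspace is again a round ball of the same radius, centered at the projection of the original center. Thus $K|P$ is the Minkowski sum of a convex set in $P$ and a ball of radius $1/\lambda$ in $P$, and by the same characterization applied inside $P$ it is a $\lambda$-concave body there, with core exactly $K_{\core}|P$.

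For the sausage statement I would use the description recorded just before the lemma: $K$ is a $\lambda$-sausage body precisely when $\dim K_{\core} \leq 1$. Since a linear map cannot increase the dimension of the affine hull, $\dim(K_{\core}|P) \leq \dim K_{\core} \leq 1$, so the core of $K|P$ is again a point or a segment, which means $K|P$ is a $\lambda$-sausage body in $P$.

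The argument is short, and the only points that deserve a moment of care are bookkeeping rather than genuine obstacles. One should check that $K|P$ is an honest convex \emph{body} in $P$, i.e.\ that it has nonempty interior relative to $P$; this is automatic, because $B_{1/\lambda}|P$ is already full-dimensional in $P$, and adding any nonempty convex set preserves full dimensionality. The substantive input is not in these manipulations but in the Minkowski-sum characterization itself, which in turn rests on Blaschke's theorem; everything after that is formal.
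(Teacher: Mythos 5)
Your proof is correct and follows essentially the same route as the paper: both decompose $K = K_{\core} + B_{1/\lambda}$ via Blaschke's ball rolling theorem, use linearity of orthogonal projection to get $K|P = K_{\core}|P + B_{1/\lambda}|P$, and identify $K|P$ as $\lambda$-concave with core $K_{\core}|P$, with the sausage case handled through $\dim(K_{\core}|P) \leq \dim K_{\core} \leq 1$. Your remarks on full-dimensionality and on the core of $K|P$ being exactly $K_{\core}|P$ are careful bookkeeping the paper leaves implicit, but nothing in substance differs.
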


\section{Proof of the reverse quermassintegral inequality for $\lambda$-concave bodies (Theorem~\ref{Thm:RQMI})}
\label{Sec:MainProof}

In this section we will prove the main result of this paper --- Theorem~\ref{Thm:RQMI}. Since the left-hand side of (\ref{Eq:MainIneq}) divided by $\lambda$ is scale-invariant, without loss of generality we can assume that $\lambda=1$. 

The following lemma is an important step towards the proof of the result. It allows to drastically simplify further computations. The proof partly follows the ideas in \cite[Theorem 2]{BH86}.  

\begin{lemma}[On three consecutive indices]
\label{Lem:3Consec}
Theorem~\ref{Thm:RQMI} holds true if and only if it holds true for every triple of consecutive indices $(l, l+1, l+2)$ with $0 \leq l \leq n-1$.
\end{lemma}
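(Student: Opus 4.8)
The ``only if'' direction is immediate, since every consecutive triple $(l,l+1,l+2)$ is a special case of an arbitrary triple $(i,j,k)$. For the converse I would normalize $\lambda=1$ and abbreviate $W_m := W_m(K)$, so that the left-hand side of \eqref{Eq:MainIneq} becomes
\[
F(i,j,k) := (k-j)W_i - (k-i)W_j + (j-i)W_k.
\]
A direct rearrangement gives $F(i,j,k) = (k-j)(W_i - W_j) + (j-i)(W_k - W_j)$, so $F(i,j,k)\ge 0$ expresses precisely that the point $(j,W_j)$ lies on or below the chord joining $(i,W_i)$ and $(k,W_k)$; equivalently, the inequality for all triples says that the sequence $(W_m)_{m=0}^{n+1}$ is convex. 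For a consecutive triple one computes $F(l,l+1,l+2) = W_l - 2W_{l+1} + W_{l+2}$, the second difference of the sequence, so the consecutive-triple hypothesis is exactly the statement that all second differences are nonnegative.

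The plan for the inequality is to quantify the elementary fact that a sequence is convex iff its second differences are nonnegative. I introduce the first differences $\Delta_m := W_{m+1}-W_m$; then $F(m,m+1,m+2) = \Delta_{m+1}-\Delta_m$, so the hypothesis says exactly that $(\Delta_m)$ is nondecreasing. Writing $W_i - W_j = -\sum_{m=i}^{j-1}\Delta_m$ and $W_k - W_j = \sum_{m=j}^{k-1}\Delta_m$, I obtain
\[
F(i,j,k) = (j-i)\sum_{m=j}^{k-1}\Delta_m - (k-j)\sum_{m=i}^{j-1}\Delta_m.
\]
Since $\Delta_m \ge \Delta_j$ for $m \ge j$ and $\Delta_m \le \Delta_{j-1}$ for $m \le j-1$, the first sum is at least $(k-j)\Delta_j$ and the second is at most $(j-i)\Delta_{j-1}$, which yields
\[
F(i,j,k) \ge (j-i)(k-j)(\Delta_j - \Delta_{j-1}) = (j-i)(k-j)\,F(j-1,j,j+1) \ge 0,
\]
proving \eqref{Eq:MainIneq} for the arbitrary triple.

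For the equality statement I would trace this chain. Note that since $0 \le i < j < k \le n+1$ we always have $1 \le j \le n$, so $(j-1,j,j+1)$ is a genuine consecutive triple to which the hypothesis applies. If $K$ is not a $\lambda$-sausage body, the consecutive version gives $F(j-1,j,j+1) > 0$, and as $(j-i)(k-j) > 0$ the displayed bound forces $F(i,j,k) > 0$. Conversely, if $K$ is a $\lambda$-sausage body, the consecutive version gives $F(l,l+1,l+2)=0$ for every $l$, i.e.\ all second differences vanish; hence $(W_m)$ is affine in $m$ and $F(i,j,k)=0$ for every triple. Thus equality holds for the general triple if and only if $K$ is a $\lambda$-sausage body, completing the reduction. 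I expect the only genuinely delicate point to be this equality analysis rather than the inequality: one must check that a valid consecutive triple is always available to certify strictness (guaranteed by $1\le j\le n$) and that the sausage characterization transfers in both directions through the affineness of $(W_m)$, whereas the inequality itself is a routine consequence of the convexity reformulation and the monotonicity of the first differences.
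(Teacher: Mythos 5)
Your proof is correct and takes essentially the same route as the paper's: both arguments use the consecutive-triple hypothesis to make the first differences $\Delta_m = W_{m+1}-W_m$ nondecreasing and then compare chord slopes across $j$, your bound $F(i,j,k)\ge (j-i)(k-j)\,F(j-1,j,j+1)$ being a quantitative repackaging of the paper's chain $(W_k-W_j)/(k-j)\ge W_j-W_{j-1}\ge (W_j-W_i)/(j-i)$. If anything, your equality discussion is slightly more complete, since you explicitly check both directions (including that a sausage body forces all second differences to vanish, hence equality for every triple), a point the paper leaves implicit.
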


\begin{proof}
One direction in this lemma is obvious; so suppose Theorem~\ref{Thm:RQMI} holds true for every triple of consecutive indices. For three consecutive indices $(l, l+1, l+2)$ Theorem~\ref{Thm:RQMI} reads as follows:
\begin{equation}
\label{Eq:3Consecutive}
W_{l}(K) -2 W_{l+1}(K) + W_{l +2}(K) \geq 0,
\end{equation}
and equality holds if and only if $K$ is a sausage body. Then for any given triple $(i, j, k)$ with $0 \leq i < j < k \leq n+1$ applying (\ref{Eq:3Consecutive}) repeatedly, we get
\begin{equation}
\label{Eq:Repeatedly}
W_k - W_{k - 1} \geqslant W_{k-1} - W_{k-2} \geqslant \dots \geqslant W_{j} - W_{j - 1} \geqslant \dots \geqslant W_{i+1} - W_{i} 
\end{equation}
(here we simplify our notation by setting $W_i := W_i (K)$). Estimating the sum of the first $k - j$ and the last $j - i$ differences in~(\ref{Eq:Repeatedly}), we get
\begin{align*}
(W_k - W_{k - 1}) +\ldots+(W_{j+1} - W_j) &\geqslant (k-j)(W_j - W_{j - 1}),\\
(W_j - W_{j - 1}) + \ldots + (W_{i+1} - W_i)  &\leqslant (j - i)( W_j - W_{j - 1}).
\end{align*}
Performing cancellation and dividing both inequalities by $k-j$ and $j - i$ respectively, we obtain
$$
\frac{W_k - W_j}{k - j} \geqslant W_j - W_{j-1}  \geqslant \frac{W_j - W_i}{j - i},
$$
and hence $(W_k - W_j)/(k - j) \geqslant (W_j - W_i)/(j - i)$. This is equivalent to (\ref{Eq:MainIneq}); the inequality is proven. If we have equality in (\ref{Eq:MainIneq}), then we must have equality throughout in (\ref{Eq:Repeatedly}). But equality for a triple of consecutive indices yields that $K$ is a sausage body. This concludes the lemma. 
\end{proof}

Our main approach towards the proof of Theorem~\ref{Thm:RQMI} will be by induction on the dimension of the ambient space. The following two lemmas provide necessary steps to run such an induction.

\begin{lemma}[Reverse inequality for $(0,1,2)$]
\label{Lem:FirstTriple}
For every $n \geq 2$, if Theorem~\ref{Thm:RQMI} holds for the triple $(1,2,3)$, then it also holds for the triple $(0,1,2)$.
\end{lemma}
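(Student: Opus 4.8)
The plan is to interpolate between the core $K_{\core}$ and the body $K$ itself along the one‑parameter family of outer parallel bodies of the core. Normalize $\lambda=1$ as in the paper, so that $B_{1/\lambda}=B$ is the unit ball and $K=K_{\core}+B$. For $u\in(0,1]$ set $K_u:=K_{\core}+uB$; this is a genuine convex body (it contains a ball of radius $u$) and, being the Minkowski sum of a convex set with $uB$, it is $\tfrac1u$-concave. Define
\[
g(u):=W_0(K_u)-2u\,W_1(K_u)+u^2\,W_2(K_u).
\]
The point is that $g(1)=W_0(K)-2W_1(K)+W_2(K)$ is exactly the triple-$(0,1,2)$ expression we must show is nonnegative, while for each $u$ the quantity $g(u)$ is the general-$\lambda$ form of the $(0,1,2)$-combination for the $\tfrac1u$-concave body $K_u$.

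First I would differentiate $g$. By the Steiner formula, for $u>0$ one has $\tfrac{d}{du}W_j(K_u)=(n+1-j)\,W_{j+1}(K_u)$, and a direct computation then collapses to
\[
g'(u)=(n-1)\bigl(W_1(K_u)-2u\,W_2(K_u)+u^2\,W_3(K_u)\bigr).
\]
The bracket is precisely the triple-$(1,2,3)$ expression for the $\tfrac1u$-concave body $K_u$, which is $\geq 0$ by hypothesis (invoking the statement at the varying bound $\lambda=\tfrac1u$ is legitimate since the normalized inequality is scale invariant). As $n\geq 2$ this gives $g'\geq 0$, so $g$ is non-decreasing on $(0,1]$.

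Next I would read off the inequality from a boundary value. As $u\to 0^+$ we have $K_u\to K_{\core}$ in the Hausdorff metric, so by continuity of quermassintegrals the terms $2u\,W_1(K_u)$ and $u^2\,W_2(K_u)$ tend to $0$ while $W_0(K_u)\to\Vol_{n+1}(K_{\core})$; hence $g(u)\to\Vol_{n+1}(K_{\core})\geq 0$. Monotonicity then yields $g(1)\geq\lim_{u\to0^+}g(u)=\Vol_{n+1}(K_{\core})\geq 0$, the desired triple-$(0,1,2)$ inequality. For the equality case, $g(1)=0$ forces both $\Vol_{n+1}(K_{\core})=0$ and $g\equiv 0$ on $(0,1]$, whence $g'\equiv 0$ and the bracket above vanishes for every $u$; by the equality clause of the triple-$(1,2,3)$ statement each $K_u$ is then a sausage, i.e.\ $\dim K_{\core}\leq 1$, so that $K$ is a $\lambda$-sausage body (the converse being immediate).

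The step I expect to be most delicate is the passage $u\to0^+$: the core $K_{\core}$ may be lower-dimensional, so the whole family degenerates there and one must argue via continuity of quermassintegrals rather than by evaluating a Steiner polynomial at $K_{\core}$. It is exactly here that the nonnegativity of the volume of the core supplies the missing anchor making $g(1)\geq 0$, and the same degeneracy ($\Vol_{n+1}(K_{\core})=0$ together with $\dim K_{\core}\leq 1$) is what pins down equality. A secondary point to phrase carefully is the licence to apply the triple-$(1,2,3)$ hypothesis at the varying curvature bound $1/u$, which rests on the scale invariance already noted in the paper.
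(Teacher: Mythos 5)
Your proof is correct, and it takes a genuinely different route from the paper's. The paper argues purely algebraically: it forms the combination $R=\sum_{q=0}^{n-2}\binom{n-2}{q}W_q(K_{\core})$, which is nonnegative because every quermassintegral of the core is nonnegative by the inner-parallel Steiner formula \eqref{Eq:SchnFormulaGen}, identifies $R=W_0-3W_1+3W_2-W_3$ via a short generating-function computation, and then concludes from the single identity $W_0-2W_1+W_2=R+(W_1-2W_2+W_3)$, invoking the $(1,2,3)$-hypothesis exactly once, for $K$ itself at $\lambda=1$. You instead interpolate along the outer parallel family $K_u=K_{\core}+uB$ and integrate: in effect you decompose $W_0-2W_1+W_2=\Vol_{n+1}(K_{\core})+(n-1)\int_0^1\bigl(W_1(K_u)-2u\,W_2(K_u)+u^2\,W_3(K_u)\bigr)\,du$, which requires the hypothesis for a whole one-parameter family of $(1/u)$-concave bodies (legitimate, as you note, since Theorem~\ref{Thm:RQMI} is quantified over all $\lambda$, or by scale invariance), but uses only the single nonnegativity $\Vol_{n+1}(K_{\core})\geq 0$ as an anchor rather than the nonnegativity of all $W_q(K_{\core})$. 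Your key computations check out: the derivative rule $\tfrac{d}{du}W_j(K_u)=(n+1-j)W_{j+1}(K_u)$ is the standard outer Steiner expansion applied to $K_{u+s}=K_u+sB$, the collapse of $g'$ to $(n-1)$ times the $(1,2,3)$-expression is correct, and the limit $u\to 0^+$ is unproblematic --- indeed each $W_j(K_u)$ is a polynomial in $u$, so you could even evaluate $g(0)=\Vol_{n+1}(K_{\core})$ directly instead of appealing to Hausdorff continuity. The equality analysis is also sound, modulo the standard fact $(K_{\core}+uB)-uB=K_{\core}$, so that each $K_u$ has core $K_{\core}$ and the sausage conclusion forces $\dim K_{\core}\leq 1$; the factor $n-1>0$ needed to pass from $g'\equiv 0$ to the vanishing of the bracket is exactly where your (and the lemma's) assumption $n\geq 2$ enters. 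What the paper's route buys is brevity and a single application of the hypothesis; what yours buys is a quantitative integral remainder term and a dynamic picture of how the deficit $W_0-2W_1+W_2$ accumulates as the core is inflated.
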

\begin{proof}

This lemma is a consequence of the general Steiner formula for inner parallel bodies. Indeed, by (\ref{Eq:SchnFormulaGen}) for every integer $q$ with $0 \leq q \leq n+1$ we have
\begin{equation}
\label{Eq:GenSteiner2}
W_{q}(K - B_1) = W_q (K_{\core}) = \sum \limits_{i = 0}^{n+1-q} (-1)^i \binom{n+1-q}{i}W_{q+i}(K) \geq 0.
\end{equation}
Therefore
\begin{equation}
\label{Eq:R}
R:=\sum \limits_{q=0}^{n-2} \binom{n-2}{q}W_q(K_{\core}) = \sum \limits_{q=0}^{n-2} \binom{n-2}{q} \sum \limits_{i = 0}^{n+1-q} (-1)^i \binom{n+1-q}{i}W_{q+i} \geq 0.
\end{equation}

Using the simplified notation $W_j = W_j(K)$, we claim that $R = W_0 - 3 W_1 + 3 W_2 - W_3$. This can be easily seen by using the formalism of generating functions. To a linear combination of quermassintegrals $\sum_{i=0}^{n+1} c_i W_i$ we associate the generating function $\sum_{i=0}^{n+1}c_i x^i$. Using such a formalism, the sum in (\ref{Eq:GenSteiner2}) corresponds to the generating function $x^q (1 - x)^{n+1-q}$. Hence, the sum in (\ref{Eq:R}) corresponds to the generating function
\begin{equation*}
\begin{aligned}
\sum_{q=0}^{n-2} \binom{n-2}{q} x^q (1-x)^{n+1-q} &= (1-x)^3 \cdot \sum_{q=0}^{n-2} \binom{n-2}{q} x^q (1-x)^{n-2-q} \\
&= (1-x)^3 \cdot (x + 1-x)^{n-2} = (1-x)^3.
\end{aligned}
\end{equation*}

Therefore, $R = W_0 - 3 W_1 + 3 W_2 - W_3$, as was claimed. But then, since $R \geq 0$, and $W_1 - 2W_2 + W_3 \geq 0$ by the hypothesis of the lemma, it follows that
\begin{equation}
\label{Eq:(0,1,2,3)}
W_0 - 2W_1 + W_2 = R + W_1 - 2W_2 + W_3 \geq 0.
\end{equation}
This is inequality~(\ref{Eq:MainIneq}) for the triple $(0,1,2)$. In order to conclude equality case, assume that $W_0 - 2W_1 + W_2 = 0$. Inequality (\ref{Eq:(0,1,2,3)}) then implies that $W_1 - 2W_2 + W_3 = 0$ because $R$ is non-negative (by (\ref{Eq:R})). By hypothesis, Theorem~~\ref{Thm:RQMI} holds for the triple $(1,2,3)$, and thus equality case for this triple implies that $K$ is a sausage body. The lemma follows.   
\end{proof}

Recall that the extended notation $W_{k+1, l}(K)$ stands for the quermassintegral of order $l$ of a convex body $K$ in $\mathbb R^{k+1}$. For $0 \leq l \leq k-1$, put
\[
E_{k+1,l} (K) := W_{k+1,\, l}(K) -2 W_{k+1,\, l+1}(K) + W_{k+1,\, l+2}(K).
\]
The next lemma guarantees that this quantity is always non-negative in dimension $n+1$ provided that $l > 0$ and that Theorem~\ref{Thm:RQMI} holds in all lower dimensions.

\begin{lemma}[Reverse inequality for $(l,l+1,l+2)$ with $l \geq 1$]
\label{Lem:NotFirstTriple}
For a given $n \geq 1$, if Theorem~\ref{Thm:RQMI} holds in $\mathbb R^{k+1}$ for every $k$ with $1 \leq k < n$, then Theorem~\ref{Thm:RQMI} holds in $\mathbb R^{n+1}$ for every triple of consecutive indices $(l, l+1, l+2)$ with $1 \leq l \leq n-1$. 
\end{lemma}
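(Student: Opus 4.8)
The plan is to prove the reverse inequality in $\mathbb{R}^{n+1}$ by \emph{averaging} the corresponding lower-dimensional inequality over orthogonal projections onto hyperplanes through the origin, and then using the Kubota formula (Lemma~\ref{Lem:Kubota}) to convert this average back into quermassintegrals of $K$ itself. Throughout I assume $\lambda = 1$ and fix $l$ with $1 \leq l \leq n-1$ (so necessarily $n \geq 2$; for $n=1$ the index range is empty and there is nothing to prove). I take $k = n-1$ in the Kubota formula, i.e.\ I integrate over the Grassmannian $G_{n+1, n}$ of $n$-dimensional subspaces.

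First I would record an averaging identity. By Lemma~\ref{Lem:Kubota} with $k = n-1$,
\[
\int_{G_{n+1, n}} W_{n,\, j}(K|P) \, d\mathbf{P} = \frac{\omega_n}{\omega_{n+1}} W_{n+1,\, 1 + j}(K)
\]
for every $0 \leq j \leq n-1$; and for the top index $j = n$ the same identity holds \emph{trivially}, since both sides reduce to the constant $\omega_n$ (because $W_{n,n}(K|P) = \omega_n$ and $W_{n+1,n+1}(K) = \omega_{n+1}$, while $\mathbf{P}$ is a probability measure). Taking the alternating combination of these identities for $j = l-1,\, l,\, l+1$ yields
\[
\int_{G_{n+1, n}} E_{n,\, l-1}(K|P) \, d\mathbf{P} = \frac{\omega_n}{\omega_{n+1}} E_{n+1,\, l}(K).
\]
The only place the trivial top-index case is needed is $l = n-1$, where the order $l+2 = n+1$ occurs. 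By Lemma~\ref{Lem:Projection}, each $K|P$ is a $\lambda$-concave body in the $n$-dimensional space $P$, and $(l-1, l, l+1)$ is an admissible triple of indices there since $1 \leq l \leq n-1$. Hence the induction hypothesis --- Theorem~\ref{Thm:RQMI} in dimension $n$ --- gives $E_{n, l-1}(K|P) \geq 0$ for every $P$, and the displayed identity immediately forces $E_{n+1, l}(K) \geq 0$, which is the desired inequality.

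For the equality case, suppose $E_{n+1, l}(K) = 0$. The integrand $P \mapsto E_{n, l-1}(K|P)$ is nonnegative, continuous (quermassintegrals are continuous in the Hausdorff metric and $P \mapsto K|P$ is continuous), and integrates to zero against a measure of full support, so it vanishes identically. By the equality case of the induction hypothesis, $K|P$ is then a sausage body --- equivalently $\dim\bigl(K_{\core}|P\bigr) = \dim\bigl((K|P)_{\core}\bigr) \leq 1$ --- for every hyperplane $P$. It remains to descend from this to $\dim K_{\core} \leq 1$, i.e.\ that $K$ itself is a $\lambda$-sausage body: if $K_{\core}$ contained three affinely independent points, then for a generic hyperplane $P$ their projections would remain affinely independent, producing a core $K_{\core}|P$ of dimension $\geq 2$, contradicting $\dim(K_{\core}|P) \leq 1$.

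I expect this last descent --- from ``every projection has a low-dimensional core'' back to ``$K_{\core}$ is itself low-dimensional'' --- to be the main conceptual step of the equality analysis, while the inequality itself is a fairly mechanical consequence of Kubota together with the inductive hypothesis. The only genuine technical subtlety in establishing the inequality is the harmless top-index extension of the Kubota identity (the case $l = n-1$) noted above; the supporting facts I would need to justify carefully are the continuity of $P \mapsto E_{n, l-1}(K|P)$ and the full-support property of the invariant measure $\mathbf{P}$, both standard.
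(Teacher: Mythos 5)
Your proof is correct and follows essentially the same route as the paper's: averaging a lower-dimensional consecutive-triple inequality over projections via the Kubota formula, using Lemma~\ref{Lem:Projection} plus the inductive hypothesis for the sign, and settling equality by showing that $\dim(K_{\core}|P)\leq 1$ for (almost) all $P$ forces $\dim K_{\core}\leq 1$. The only difference is parametric --- you project one dimension down onto hyperplanes $P\in G_{n+1,n}$ and invoke the triple $(l-1,l,l+1)$ in dimension $n$, whereas the paper projects onto $(n-l+1)$-dimensional subspaces and invokes only the base triple $(0,1,2)$ there --- and your explicit check of the top-index case $j=n$ of Kubota (needed when $l=n-1$) actually covers a point the paper leaves implicit, since for $l=n-1$ its own application with $j=2>k=1$ requires the same trivial extension $W_{k+1,\,k+1}=\omega_{k+1}$.
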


\begin{proof}
Let $K$ be a 1-concave body in $\R^{n+1}$, and $l$ be an integer satisfying $1 \leq l \leq n-1$. By the Kubota formula (Lemma~\ref{Lem:Kubota}),
\begin{equation}
\label{Eq:EKubota}
E_{n+1, l}(K) = \frac{\omega_{n+1}}{\omega_{n- l + 1}} \int \limits_{G_{n+1, n-l + 1}} E_{n - l + 1, 0}(K|P) d \bold{P}. 
\end{equation}
From the bounds on $l$ it follows that $2 \leq n-l+1 < n+1$, and hence the Grassmann manifold $G_{n+1, n-l + 1}$ is not the trivial one point set $\{ \mathbb R^{n+1} \}$.

By Lemma~\ref{Lem:Projection}, the set $K|P$ is a $\lambda$-concave body in the $(n-l+1)$-dimensional subspace $P$. By hypothesis of the lemma, Theorem~\ref{Thm:RQMI} holds true for such spaces. Therefore, 
\begin{equation}
\label{Eq:LowDimIneq}
E_{n - l + 1, 0}(K|P) \geq 0 \text{ for every } P \in G_{n+1, n-l+1}, 
\end{equation}
and for any given $P$ equality holds if and only if $K|P$ is a sausage body. Combining (\ref{Eq:EKubota}) and (\ref{Eq:LowDimIneq}) we conclude that $E_{n+1, l}(K) \geq 0$, which is exactly the inequality part in Theorem~\ref{Thm:RQMI} for the triple $(l, l+1, l+2)$ and all $1$-concave bodies in $\mathbb R^{n+1}$.

Let us analyze the equality part of Theorem~\ref{Thm:RQMI} for the triple $(l, l+1, l+2)$. Suppose for a given $1$-concave body $K \subset \mathbb R^{n+1}$ one has $E_{n+1, l}(K) = 0$. Then $E_{n-l+1, 0}(K|P) = 0$ for almost all $P \in G_{n+1, n-l+1}$. By hypothesis, the latter equality implies that $K|P$ is a sausage body, again for almost all $P \in G_{n+1, n-l+1}$. Moreover, since 
\[
K|P = K_{\core} | P + B_1 | P,
\]
and $K|P$ is a sausage body, we conclude that $\dim (K_{\core} | P) \leq 1$ for almost all $P$ in the Grassmannian $G_{n+1, n-l+1}$. Taking into account that the dimension of each $P$ is at least $2$, this gives us that $\dim(K_{\core}) \leq 1$. Therefore, $K$ is a $1$-sausage body. The lemma is proven.
\end{proof}

\begin{proof}[Proof of Theorem~\ref{Thm:RQMI}]
Due to Lemma~\ref{Lem:3Consec}, it is enough to prove the result for every triple of consecutive indices. The claim of Theorem~\ref{Thm:RQMI} is now a consequence of Lemmas~\ref{Lem:FirstTriple} and \ref{Lem:NotFirstTriple} by induction on the dimension of the ambient space, which is done as follows. 

The case $n=1$ forms the base of the induction. In $\mathbb R^2$ the result is obvious, and is just a restatement of Steiner formula (\ref{Eq:SchnFormula0}). Indeed, if $K$ is a $1$-concave body in $\mathbb R^2$, then 
\[
0 \leq \Vol_2(K - B_1) = \Vol_2(K_{\core}) = W_0(K) - 2W_1 (K) + W_2(K)
\]
by (\ref{Eq:SchnFormula0}); this proves the inequality (the triple $(0,1,2)$ is the only possible in dimension two). In order to conclude the inequality part, observe that $\Vol_2(K_{\core}) = 0$ implies $\dim K_{\core} \leq 1$, and hence $K$ is a $1$-sausage body. 

The inductive step is a combination of Lemmas~\ref{Lem:FirstTriple} and \ref{Lem:NotFirstTriple}.   
\end{proof}

\section{Concluding remarks}
\label{Sec:Remarks}

\subsection{The reverse isodiametric inequality.} 
\label{Subsec:RevIsodiam}
In this subsection we want to extend our philosophy of a reverse isoperimetric problem to a so-called \emph{isodiametric inequality}. Recall that a \emph{diameter} of a convex body $K \subset \mathbb R^{n+1}$, denoted as $\diam(K)$, is the following quantity:
$$
\diam(K) = \max \limits_{p,q \in K} |pq|.
$$
In other words, the diameter is the length of the largest segment that connects two points in $K$. The classical isodiametric inequality for convex bodies in $\mathbb R^{n+1}$ asserts that for a given diameter $D$ the ball of radius $D/2$ has the largest volume among all convex bodies of diameter $D$ (see \cite[p.\,383]{Sch}).

One simple observation allows us to prove the \emph{reverse isodiametric inequality}.

\begin{theorem}[Reverse isodiametric inequality for $\lambda$-concave bodies]
\label{Thm:RevIsodiam}
Let $K \subset \mathbb R^{n+1}$ be a convex body. Suppose $K$ is $\lambda$-concave, and let $S_\lambda \subset \mathbb R^{n+1}$ be the $\lambda$-sausage body with
$$
\diam(K) = \diam(S_\lambda).
$$ 
Then
\begin{equation}
\label{Eq:RevIsodiam}
W_i (K) \geqslant W_i(S_\lambda) 
\end{equation}
for every $i \in \{0,1,\ldots,n\}$. Moreover, equality holds if and only if $K$ is a $\lambda$-sausage body.
\end{theorem}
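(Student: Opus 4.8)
The plan is to reduce the reverse isodiametric inequality to the reverse quermassintegral inequality (Theorem~\ref{Thm:RQMI}) by comparing $K$ with the $\lambda$-sausage body $S_\lambda$ that shares its diameter. The key observation is that the diameter of a $\lambda$-concave body controls its quermassintegrals from below, with the sausage body being the extremal case. Since every $\lambda$-concave body decomposes as $K = K_{\core} + B_{1/\lambda}$, and a $\lambda$-sausage body is precisely one whose core is a point or a segment, the natural strategy is to show that among all $\lambda$-concave bodies of a fixed diameter, the sausage body minimizes each quermassintegral $W_i$.

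**First** I would establish the relationship between the diameter of $K$ and the diameter of its core $K_{\core}$. If $S_\lambda$ is the convex hull of two balls of radius $1/\lambda$ whose centers are a distance $d$ apart, then its core is the segment joining the two centers, and $\diam(S_\lambda) = d + 2/\lambda$. For a general $\lambda$-concave body, the decomposition $K = K_{\core} + B_{1/\lambda}$ and the behavior of diameter under Minkowski addition give $\diam(K) = \diam(K_{\core}) + 2/\lambda$. Thus the condition $\diam(K) = \diam(S_\lambda)$ is equivalent to $\diam(K_{\core}) = \diam(S_{\lambda,\core}) = d$. So the problem becomes: among convex sets of diameter $d$ used as a core (i.e.\ among all $\lambda$-concave bodies $K_{\core} + B_{1/\lambda}$ with $\diam(K_{\core}) = d$), the one with $K_{\core}$ a segment of length $d$ minimizes every $W_i$.

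**Next**, the main step is a monotonicity argument for quermassintegrals. Since $\diam(K_{\core}) = d$, there exist two points in $K_{\core}$ at distance $d$; the segment $\Sigma$ joining them is contained in $K_{\core}$ by convexity. Hence $S_\lambda = \Sigma + B_{1/\lambda} \subseteq K_{\core} + B_{1/\lambda} = K$, after translating $S_\lambda$ appropriately. Quermassintegrals are monotone under inclusion of convex bodies (this is standard; see \cite{Sch}), so $K \supseteq S_\lambda$ immediately yields $W_i(K) \geqslant W_i(S_\lambda)$ for every $i$. This is the ``one simple observation'' promised before the theorem statement: the inclusion $S_\lambda \subseteq K$ follows directly from $\diam(K) = \diam(S_\lambda)$ together with the core decomposition.

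**The equality analysis** is where the real work lies, and I expect it to be the main obstacle. Monotonicity of quermassintegrals is strict for the relevant orders, so $W_i(K) = W_i(S_\lambda)$ for some $i \in \{0,\ldots,n\}$ should force $K = S_\lambda$ up to translation. For $i = 0$ this is just equality of volumes of nested convex bodies, which forces $K = S_\lambda$ directly. For the general case one can invoke that equality in the monotonicity $W_i(K) \geqslant W_i(S_\lambda)$ with $S_\lambda \subseteq K$ holds only when the bodies coincide (for $0 \leqslant i \leqslant n$, the quermassintegral $W_i$ is strictly monotone on convex bodies). Concluding $K = S_\lambda$ means $K_{\core} = \Sigma$, so $\dim K_{\core} \leqslant 1$ and $K$ is a $\lambda$-sausage body, as required. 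The delicate point is justifying strict monotonicity of $W_i$ for all $i$ up to $n$, rather than merely nonstrict; this is guaranteed because a convex body strictly containing another has strictly larger $i$-th intrinsic volume for every $i$ in this range, a fact for which I would cite the standard theory of mixed volumes.
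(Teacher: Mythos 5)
Your proposal is correct and takes essentially the same approach as the paper: both arguments reduce everything to producing the inclusion $S_\lambda \subseteq K$ (up to translation) and then invoking monotonicity of quermassintegrals under inclusion, with strictness handling the equality case; the only difference is that the paper obtains the inclusion by applying Blaschke's rolling theorem to the supporting balls at two diameter-realizing points of $\partial K$, whereas you derive it from the core decomposition $K = K_{\core} + B_{1/\lambda}$ together with the identity $\diam(K) = \diam(K_{\core}) + 2/\lambda$ --- interchangeable routes, since the decomposition is itself a consequence of Blaschke's theorem. Your extra care about strict monotonicity of $W_i$ for $i \leq n$ is exactly the point the paper compresses into its citation of Schneider.
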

\begin{proof}

Let $p$ and $q$ be a pair of points in $K$ realizing the diameter of $K$. It is easy to see that necessarily $p, q \in \partial K$, and moreover, both tangent planes to $\partial K$ at $p$ and $q$ are perpendicular to the segment $pq$. Therefore, if $B_{1/\lambda, p}$ and $B_{1/\lambda, q}$ are the supporting balls at $p$ resp.\ $q$ of radius $1/\lambda$, then $B_{1/\lambda,p} \subseteq K$ and $B_{1/\lambda,q} \subseteq K$ (by Blaschke's ball rolling theorem (Theorem~\ref{Thm:BlBallRoll})) and the convex hull of $B_{1/\lambda,p} \cup B_{1/\lambda,q}$ is the $\lambda$-sausage body $S_\lambda$ of diameter $|pq| = \diam(K)$. Inequality (\ref{Eq:RevIsodiam}) and the equality case then follow by monotonicity of quermassintegrals with respect to inclusion (see \cite[p.\,282]{Sch}).
\end{proof}

\begin{remark}
Theorem~\ref{Thm:RevIsodiam} implies the following sharp estimate on the $i$-th quermassintegral $W_i = W_i(K)$ of a $1$-concave body $K$ in terms of its diameter $D=\diam(K)$:
$$
W_i \geqslant \omega_{n+1}  + \frac{n-i+1}{n+1} \left(D - 2\right) \omega_n \quad \text{ for every }i \in \{0, \ldots, n\}.
$$
The estimate follows by a direct computation of $W_i(S_1)$.
\end{remark}

\subsection{The reverse isoperimetric problem for $\lambda$-convex domains.}
\label{Subsec:LambdaConvex}

We conclude with a surprising difference between the reverse isoperimetric problems for $\lambda$-convex and $\lambda$-concave bodies. For simplicity we restrict ourselves to the Euclidean space, although everything written below makes perfect sense for constant curvature spaces and even general Riemannian manifolds (with appropriate adjustments).

Recall that a convex body $K \subset \mathbb R^{n+1}$ is \emph{$\lambda$-convex} if for every $p \in \partial K$ there exists a ball $B_{1/\lambda, p}$ of radius $1/\lambda$ with the boundary sphere passing through $p$ in such a way that 
\begin{equation}
\label{Eq:LambdaConvexLocal}
B_{1/\lambda, p} \cap U(p) \supseteq K \cap U(p)
\end{equation}
for some small open neighborhood $U(p) \subset \mathbb R^{n+1}$ of $p$ (see \cite{BorDr13, DrPhD}). 

Although $\lambda$-convexity and $\lambda$-concavity seem to be two notions dual to each other, methods and difficulties in solving the reverse isoperimetric problem in each of these classes are quite distinct. In our paper we completely solved the reverse isoperimetric problem for $\lambda$-concave bodies in $\mathbb R^{n+1}$. 
At the same time, only partial results are currently available for $\lambda$-convex bodies.
In particular, the two-dimensional case of the reverse isoperimetric problem for $\lambda$-convex curves, as we already mentioned in the introduction, is completely solved, see \cite{BorNA, BorDr14, BorDr15_1, Dr14}. For higher dimensions the following conjecture is due to Alexander Borisenko (private communication; see also \cite[Subsection 4.7]{DrPhD}).

\begin{conjecture}[Reverse isoperimetric inequality for $\lambda$-convex bodies]
A $\lambda$-convex lens in $\mathbb R^{n+1}$, that is an intersection of two balls of radius $1/\lambda$, is the unique body that minimizes the volume among all $\lambda$-convex bodies of given surface area.
\end{conjecture}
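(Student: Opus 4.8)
The plan is to first attempt a dualization of the method used for $\lambda$-concave bodies and, where that inevitably breaks down, to fall back on a direct variational argument. Two structural facts should be recorded at the outset. By the dual Blaschke rolling theorem (a ball of radius $1/\lambda$ rolling freely on the \emph{outside}), a convex body is $\lambda$-convex if and only if it is an intersection of balls of radius $1/\lambda$; this is the exact dual of the decomposition $K = K_{\core} + B_{1/\lambda}$ recorded before Definition~\ref{Def:Core}. Since every $\lambda$-convex body is contained in each of the balls whose intersection it is, it has diameter at most $2/\lambda$. Hence the class of $\lambda$-convex bodies of surface area at most a fixed $A$ is uniformly bounded; being closed under Hausdorff limits and carrying continuous volume and surface-area functionals, it is compact modulo translation. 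A volume minimizer of given surface area therefore exists inside the class.

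The tempting dualization of Section~\ref{Sec:MainProof} would replace Minkowski sums and orthogonal projections by intersections and hyperplane sections, the latter playing for $\lambda$-convex bodies the role that projections played for $\lambda$-concave ones. This is exactly where the difficulty concentrates. Whereas $B_{1/\lambda}|P$ is a ball of radius \emph{exactly} $1/\lambda$, so projections preserve the curvature scale and close the induction, a hyperplane section of $B_{1/\lambda}$ is a ball of \emph{smaller} radius; thus a section of a $\lambda$-convex body, while still $\lambda$-convex, is in fact $\mu$-convex for some $\mu \geq \lambda$ that varies with the section, so its natural extremal competitor is a lens of radius $1/\mu \neq 1/\lambda$. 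Moreover, intersections admit no analogue of the Steiner formula, so the generating-function computation behind Lemma~\ref{Lem:FirstTriple} has no dual. I therefore do not expect a clean inductive proof: the scale invariance that made the $\lambda$-concave case tractable is genuinely absent here.

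The main proposal is consequently variational. Normalizing $\lambda = 1$ and fixing a minimizer $K$ from the compactness step, I would compute the first variation of volume under a normal perturbation $f$ subject to the surface-area constraint, obtaining the Euler--Lagrange condition that the mean curvature is constant on the part of $\partial K$ where the curvature constraint $k_i \geq 1$ is inactive, while on the active part the extremal curvatures are forced to saturate at $k_i \equiv 1$. Because the conjectured minimizer is only piecewise smooth --- a lens has an edge along the equatorial sphere where its two caps meet --- this variation must be performed in the viscosity/weak sense, treating $k_i \geq 1$ as a one-sided obstacle. The curvature constraint is precisely what rules out the degenerate pancakes that would otherwise drive the volume to zero, and the expected output of this step is that the smooth part of $\partial K$ consists of pieces of unit spheres, so that $K$ is an intersection of finitely many unit balls.

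The hardest step, and the one that keeps the problem open, is the final rigidity: showing that among all intersections of unit balls of a fixed surface area the volume is strictly smallest for an intersection of exactly \emph{two} balls, and that any configuration using three or more caps can be strictly improved. This is a genuinely global comparison with no linear structure to exploit, and the edge set where regularity fails is exactly where competing configurations differ, so the argument cannot be localized to the smooth strata. A natural line of attack is a symmetrization or ``cap-merging'' flow that strictly decreases volume while preserving surface area and $\lambda$-convexity, driving every minimizer toward a two-cap lens; engineering such a flow so that it remains inside the class and is strict except at the lens is, I expect, the crux of the entire problem, and is what the two-dimensional treatments in \cite{BorDr14, BorDr15_1, Dr14} accomplish only through arguments special to curves.
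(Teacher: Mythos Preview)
The paper does not prove this statement: it is presented explicitly as an open \emph{conjecture} attributed to Borisenko, and the surrounding discussion records that beyond the planar case it has been verified only for $\lambda$-convex surfaces of revolution in $\mathbb R^3$. There is therefore no proof in the paper to compare your proposal against.

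Your proposal is not a proof either, and you are candid about this. You correctly diagnose why the dualization of the $\lambda$-concave argument from Section~\ref{Sec:MainProof} fails --- hyperplane sections of $B_{1/\lambda}$ have strictly smaller radius so the curvature scale drifts, and there is no Steiner-type polynomial identity for intersections to replace Lemma~\ref{Lem:FirstTriple}. You then outline a variational route (compactness, first variation under the obstacle $k_i \geq 1$, reduction to intersections of unit balls) and explicitly flag the final rigidity step --- that two balls beat three or more at fixed surface area --- as ``the one that keeps the problem open.'' This assessment is consistent with what the paper itself says about the status of the conjecture. Note, however, that even the intermediate reduction to \emph{finitely} many spherical caps is not rigorously secured by your sketch: running the Euler--Lagrange analysis at a minimizer that is a priori only an arbitrary intersection of unit balls, with the curvature constraint active on a set of unknown structure and with a singular edge locus, is already nontrivial. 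In short, the paper offers no proof, and your proposal is an honest outline of a strategy together with an accurate explanation of where it stalls.
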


\begin{remark}
A similar conjecture can be stated for all model spaces of constant curvature. In this case the balls are substituted with convex bodies whose boundary is of constant normal curvature equal to $\lambda$. 
\end{remark}

Apart from the case $n=1$, so far this conjecture was verified only in $\mathbb R^3$ for $\lambda$-convex surfaces \emph{of revolution}, see the research announcement in \cite{Dr3}, and \cite{Dr4}. 

Finally, it is interesting to point out numerous results concerning so-called \emph{ball-polyhedra} (see, for example, the paper of Bezdek et al.\,\cite{BLNP} and references therein). A ball-polyhedron is the intersection of finitely many balls of the same radius. Therefore, this is a dual notion to a $\lambda$-concave polytope. In our terminology we would call them $\lambda$-convex polytopes, and a $\lambda$-convex lens is one of them. Following the ideas of Bezdek et al.\,, Fodor, Kurusa and V\'igh \cite{FKV} introduce a notion of $r$-hyperconvexity, which is $1/r$-convexity in the sense of the definition above. In the same paper the authors prove that a two-dimensional $\lambda$-convex lens is a solution of the reverse isoperimetric problem for $\lambda$-convex curves in $\mathbb R^2$ \cite[Theorem 1.3]{FKV}, which was proven earlier in a sharper version in \cite{BorDr14}. Besides, Fodor, Kurusa and V\'igh \cite{FKV} state a conjecture (attributed to Bezdek) which in our language asserts that the intersection of all balls of radius $1/\lambda$ containing a pair of given points (a \emph{$\lambda$-convex spindle}) is a unique body with smallest volume among all $\lambda$-convex bodies of given surface area. This conjecture is false, at least in $\mathbb R^3$, as the results in \cite{Dr3, Dr4} indicate (it is also not hard to check by a direct comparison of volumes of the conjectural solutions).



\begin{thebibliography}{XXZZ}

\bibitem[Bal1]{Bal1} K. Ball, \emph{Volume ratios and a reverse isoperimetric inequality}, J. London Math. Soc. 44 (1991), 351-359.

\bibitem[Bal2]{Bal2}  K.\,M. Ball, \emph{Volumes of sections of cubes and related problems}, In J.~Lindenstrauss and V.\,D.~Milman, editors, Israel seminar on Geometric Aspects of Functional Analysis, number 1376 in Lectures Notes in Mathematics, Springer-Verlag, 1989.

\bibitem[Ba]{Ba} F. Barthe, \emph{On a reverse form of the Brascamp-Lieb inequality}, Invent. Math. 134 (1998), 335-361.

\bibitem[BLNP]{BLNP}
K. Bezdek, Z. L\'angi, M. Nasz\'odi, P. Papez, \emph{Ball-polyhedra}, Discrete Comput. Geom. 38 (2007), no. 2, 201-230.

\bibitem[Bla]{Bla56} W. Blaschke,
\emph{Kreis und Kugel},~-- de Gruyter, Berlin, 1956.

\bibitem[Bo]{B} J. Bokowski, \emph{Eine versch\"arfte Ungleichung zwischen Volumen, Oberfl\"ache und Inkugelradius im R}, Elem. Math. 28 (1973), 32-44.

\bibitem[BH]{BH86} J. Bokowski, E. Heil, \emph{Integral representations of quermassintegrals and Bonnesen-style inequalities}, Arch. Math. 47 (1986), 79-89.

\bibitem[Bor1]{Bor02} 
A. Borisenko, \emph{Convex sets in Hadamard manifolds}, Diff. Geom. Appl. 17 (2002), 111-121.

\bibitem[Bor2]{BorNA} A. Borisenko, \emph{Reverse isoperimetric inequality in two-dimensional Alexandrov spaces}, Proc. Amer. Math. Soc. 145 (2017), 4465-4471.

\bibitem[BDr1]{BorDr13} A. Borisenko, K. Drach, \emph{Closeness to spheres of hypersurfaces with normal curvature bounded below}, Sb. Math. 204:11 (2013), 1565-1583.

\bibitem[BDr2]{BorDr14} A. Borisenko, K. Drach, \emph{Isoperimetric inequality for curves with curvature bounded below}, Math. Notes 95:5 (2014), 590-598.

\bibitem[BDr3]{BorDr15_1} A. Borisenko, K. Drach, \emph{Extreme properties of curves with bounded curvature on a sphere}, J. Dyn. Control Syst. 21:3 (2015), 311-327.

\bibitem[BGR]{BGR} A. Borisenko, E. Gallego, A. Revent\'os, \emph{Relation between area and volume for $\lambda$-convex sets in Hadamard manifolds}, Differential Geom. Appl. 14 (2001), no. 3, 267-280.

\bibitem[BM]{BM} A. Borisenko, V. Miquel, \emph{Total curvatures of convex hypersurfaces in hyperbolic space}, Illinois J. Math. 43 (1999), no. 1, 61-78.

\bibitem[BZ]{BZ} Y.\,D. Burago, V.\,A. Zalgaller, \emph{Geometric inequalities}, Springer-Verlag, Berlin (1988).

\bibitem[BS]{BrStr89} J.\,N. Brooks, J.\,B. Strantzen, \emph{Blaschke's rolling theorem in $\mathbb R^n$}, Mem. Amer. Math. Soc. 80 (405) (1989).

\bibitem[Ch]{RomaThesis} R. Chernov, \emph{Extreme problems for curves and surfaces of bounded curvature}, bachelor thesis, V.\,N.~Karazin Kharkiv National University, Kharkiv, 2016. (Russian)

\bibitem[Di]{D} V.\,I. Diskant, \emph{A generalization of Bonnesen’s inequalities}, Soviet Math. Doklady 14 (1973), no. 6, 1728-1731 (translation of Doklady Akad. Nauk SSSR 213 (1973), 519-521).

\bibitem[Dr1]{Dr14} K. Drach, \emph{About the isoperimetric property of $\lambda$-convex lunes on the Lobachevsky plane},
Dopov. Nats. Akad. Nauk Ukr., Mat. Pryr. Tekh. Nauky 11 (2014), 11-15. (Russian; English translation: arXiv:1402.2688 [math.DG])

\bibitem[Dr2]{DrPhD}
K. Drach, \emph{Extreme bounds for complete hypersurfaces in Riemannian spaces}, PhD thesis, B.~Verkin Institute for Low Temperature Physics and Engineering of the NAS of Ukraine, Kharkiv, 2016. (Russian)

\bibitem[Dr3]{Dr3}
K. Drach, \emph{On a solution of the reverse Dido problem in a class of convex surfaces of revolution}, Dopov. Nats. Akad. Nauk Ukr. Mat. Prirodozn. Tekh. Nauki 4 (2016), 7-12.

\bibitem[Dr4]{Dr4}
K. Drach, \emph{Reverse isoperimetric inequality for strictly convex surfaces of revolution}, in preparation.

\bibitem[FKV]{FKV}
F. Fodor, A. Kurusa, V. V\'igh, \emph{Inequalities for hyperconvex sets}, Adv.\,Geom.\,16 (2016), no. 3, 337-348. 

\bibitem[G]{G} R. Gardner, \emph{Geometric Tomography}, second edition, Cambridge University Press, New York, 2006.


\bibitem[HTr]{HTr95} R. Howard, A. Treibergs, \emph{A reverse isoperimetric inequality, stability and extremal theorems for plane curves with bounded curvature}, Rocky Mountain J. Math. 25:2 (1995), 635-684.

\bibitem[He]{H} B. Herz, \emph{\"Uber die Willssche Verallgemeinerung einer Ungleichung von Bonnesen}, Monatsh. Math. 75 (1971), 316-319.

\bibitem[Ga]{Ga12} A. Gard, \emph{Reverse isoperimetric inequalities in $\mathbb R^3$}, PhD Thesis, The Ohio State University, Columbus, 2012.

\bibitem[Mi]{Mil70} A.\,D. Milka, \emph{A certain theorem of Schur--Schmidt},  Ukrain. Geometr. Sb. Vyp. 8 (1970) 95-102.(Russian)

\bibitem[PZh]{PZh} S.\,L. Pan, H. Zhang, \emph{A reverse isoperimetric inequality for convex plane curves}, Beitr\"age Algebra Geom. 48 (2007), 303-308.

\bibitem[Ro]{Ros} A. Ros, \emph{The isoperimetric problem}, In Global theory of minimal surfaces, volume 2 of Clay Math. Proc., pages 175-209. Amer. Math. Soc., Providence, RI, 2005.

\bibitem[SY]{SY} J.\,R. Sangwine-Yager,  \emph{Bonnesen-Style Inequalities for Minkowski Relative Geometry}, Transactions of the American Mathematical Society, vol. 307, no. 1, 1988, pp. 373-382.

\bibitem[Sch]{Sch} R. Schneider, \emph{Convex bodies: The Brunn--Minkowski theory}, Second expanded edition, Cambridge press, 2014.

\bibitem[Wi]{W} J.\,M. Wills, \emph{Zum Verhaltnis von Volumen zur Oberfl\"ache bei konvexen K\"orpern}, Arch. Math 21 (1970), 557-560.

\bibitem[XXZZ]{XXZZ} Y. Xia, W. Xu, J. Zhou, B. Zhu, \emph{Reverse Bonnesen style inequalities in a surface $\mathbb{X}^2_\varepsilon$ of constant curvature}, Sci. China Math. 56 (2013), 1145-1154.



\end{thebibliography}
\end{document}